\newtheorem{thm}{Theorem}[section]
\newtheorem{lem}[thm]{Lemma}
\newtheorem{prp}[thm]{Proposition}
\newtheorem{asm}[thm]{Assumption}
\newtheorem{ccl}[thm]{Conclusion}
\def\1{{\bf{1}}}
\def\s{{\widetilde{\1}}}
\def\0{{\bf{0}}}
\def\x{{\bf{x}}}
\def\y{{\bf{y}}}
\def\z{{\bf{z}}}
\def\vv{{\bf{k}}}
\def\to{{\!\top}}
\def\SG{{\mathcal{G}}}
\def\J{\widetilde{J}}
\def\L{\widetilde{L}}
\begin{document}

\title{\bf The signed graphs with two eigenvalues unequal to $\pm 1$}

\author{
Willem H. Haemers\thanks{\small{\tt{haemers@uvt.nl}}}
\\
{\small Dept. of Econometrics and O.R., Tilburg University, The Netherlands}
\\[5pt]
Hatice Topcu\thanks{\small{\tt{haticekamittopcu@gmail.com}}}
\\
{\small Dept. of Mathematics, Nev\c{s}ehir Hac{\i} Bekta\c{s} Veli University, T\"urkiye}\\
}
\date{}

\maketitle

\begin{abstract}
\noindent
We complete the determination of the signed graphs for which the adjacency matrix has all 
but at most two eigenvalues equal to $\pm 1$.
The unsigned graphs and the disconnected, the bipartite and the complete signed graphs with
this property have already been determined in two earlier papers.
Here we deal with the remaining cases.
\\[5pt]
{Keywords:}~signed graph, graph spectrum, spectral characterization.
AMS subject classification:~05C50.
\end{abstract}

\section{Introduction}

A {\em signed graph} $G^\sigma$ is a graph $G=(V, E)$ together with
a function $\sigma : E \rightarrow \{-1, +1\}$, called the \textit{signature function}.
So, every edge is either positive or negative.
The graph $G$ is called the {\em underlying graph} of $G^\sigma$.
The adjacency matrix $A$ of $G^\sigma$ is obtained from the adjacency matrix of $G$,
by replacing $1$ by $-1$ whenever the corresponding edge is negative.
The signed graph $G^{-\sigma}$ with adjacency matrix $-A$ is called the {\em negative} of $G^\sigma$.
The spectrum of $A$ is also called the spectrum of the signed graph $G^\sigma$.
For a vertex set $X\subset V$, the operation that changes the sign of all edges between $X$ and $V\setminus X$ is called switching.
In terms of the matrix $A$, switching multiplies the rows and columns of $A$ corresponding to $X$ by $-1$.
If a signed graph can be switched into an isomorphic copy of another signed graph, the two signed graphs are called
{\em switching isomorphic}.
Switching isomorphic signed graphs have similar adjacency matrices and therefore they are cospectral (that is, they
have the same spectrum).
For this and more background on signed graphs we refer to~\cite{BCKW}.

In this paper we determine all signed graphs for which the adjacency matrix has all but at most 
two eigenvalues equal to $\pm 1$.
The unsigned graphs with this property have been determined in \cite{CHVW}.
This made it possible to find all (unsigned) graphs in this class which are 
determined by its adjacency spectrum.
This result motivated us to determine the signed graphs in the class.
The first results of this project are published in \cite{HT}.
There we dealt with the disconnected, the bipartite and the complete signed graphs.
This includes the signed graphs with just one or no eigenvalues unequal to $\pm 1$.
Here we determine the remaining cases.

The extension of spectral characterizations from unsigned to signed graphs
has a complication because the spectrum is invariant under switching.
Thus one can only expect a signed graph to be determined by the spectrum up to switching.
Nevertheless, there do exist some achievements.
For example in \cite{AHMP} it is determined for which $n$ the switching class of the 
signed path $P_n$ is determined by the spectrum.

We define $\SG$ to be the set of signed graphs for which the spectrum has all but at most 
two eigenvalues equal to $1$ or $-1$.
Then $\SG$ is closed under switching, taking the negative, and adding or deleting isolated edges.
Clearly every signed graph switching isomorphic to an unsigned graph in $\SG$ is in $\SG$, 
and so is its negative.
However, there are many other signed graphs in $\SG$.

We use eigenvalue interlacing, spectral properties of equitable
partitions and other techniques from linear algebra for which we refer to~\cite{BH}.
Some background on signed graphs can be found in~\cite{BCKW}.
As usual, $J$ is the all-ones matrix and $O$ the all-zeros matrix.
The all-ones and all-zeros vector are denoted by $\1$ and $\0$ respectively.
When necessary we give the size of $J$, $O$, $\1$ or $\0$ as a subscript.
The reverse identity matrix of order $m$ is denoted by $R_m$ 
(that is, $R_m(i,j)=1$ if $i+j=m+1$, and $R_m(i,j)=0$ otherwise).
Note that all eigenvalues of $R_m$ are equal to $\pm 1$.

\section{Main result}\label{main}

Here we describe the signed graphs in $\SG$.
The disconnected ones, the signed complete graphs, and those with at most one eigenvalue unequal to $\pm 1$ 
were found in \cite{HT} (see Section~\ref{pre}) and for the ones switching isomorphic with an unsigned 
graph or its negative we refer to \cite{CHVW}.

\begin{thm}
Suppose $G^\sigma$ is a connected signed graph with two eigenvalues unequal to $\pm 1$, which is not a signed complete graph.
Then $G^\sigma$ or its negative $G^{-\sigma}$ is switching isomorphic with an unsigned graph in $\SG$, or with a signed graph represented by one of the following matrices.
\\[5pt]
$
A_1=\left[
\begin{array}{cc}
\!J\!-\!I_m\! & J \\ \!J & \!-R_{2\ell\!}
\end{array}
\right]
$
($m,\ell\geq 2$) 
with spectrum $\{-1^{\ell+m-2},1^\ell,\frac{1}{2}(m-2\pm\sqrt{m(m+8\ell)}\,)\}$.
\\[5pt]
$
A_2=\left[
\begin{array}{cc}
\!R_{2m}\! & J \\ J & \!-R_{2\ell}\!
\end{array}
\right]
$ 
($m,\ell\geq 2$) with spectrum $\{-1^{m+\ell-1},1^{m+\ell-1},\pm\sqrt{1+4m\ell}\}$.
\\[5pt]
$
A_3=\left[
\begin{array}{rrcr}
\!J\!-\!I_m\! & \1     & \1  & O\ \\
\ \1^\top\    & 0      &  1  & -\1^\top \\
\ \1^\top\    & 1      &  0  &  \1^\top \\
O\ \          & \!-\1  & \1  & \!I_\ell-\!J\!
\end{array}
\right]
$
($m,\ell\geq 1$) with spectrum $\{-1^m,1^\ell,-\ell-1,m+1\}$.
\\[5pt]
$
A_4=\left[
\begin{array}{cccc}
\!J\!-\!I_m\! & J                & \, J  & O \\
J\ \          & \!I_\ell\!-\!J\! & \, O  & J \\
J\ \          & O                & R_2\! & O \\
O\ \          & J                & \, O  & \!-R_2\! 
\end{array}
\right]
$\hspace{-3pt}  
\begin{tabular}{ll}
($m,\ell\geq 1$) with spectrum
\\[3pt]
$\{-1^{m+1},1^{\ell+1},
\frac{1}{2}(m-\ell\pm\sqrt{m^2+\ell^2+6m\ell+4m+4\ell+4})\}$.
\end{tabular}
\\[5pt]
$
A_5=\left[
\begin{array}{ccc}
\! J\! -\! I_m \! & J            & J \\
J           &\! J\!-\!I_\ell \!& O \\
J           & O            &\! I_k\!-\! J \! \\
\end{array}
\right]
$
\hspace{-3pt}
\begin{tabular}{ll}
($k \geq 2$, $(m,\ell)=(3,8)$, $(4,6)$, or $(6,5)$)
with spectra
\\[3pt]
$\{-1^{9},1^{k},\frac{1}{2}(9-k\pm\sqrt{k^2+26k+121})\}$,
\\[3pt]
$\{-1^{8},1^{k}, \frac{1}{2}(8-k
\pm\sqrt{k^2+28k+100})\}$,
\\[3pt]
$\{-1^{9},1^{k},\frac{1}{2}(9-k\pm\sqrt{k^2+38k+121})\}$.
\end{tabular}
\\[5pt]
$
A_6=\left[
\begin{array}{ccc}
\! J\!-\! I_m\! & J        & J \\
J     & \! I_\ell\!-\! J \! & O \\
J     & O        & \! R_{2k}\!
\end{array}
\right]
$\hspace{-3pt}
\begin{tabular}{ll}
($m\geq 1$, $(\ell,k)=(3,4)$, or $(4,3)$)
with spectra
\\[3pt]
$\{-1^{m+4},1^{5},\frac{1}{2}(m-1
\pm\sqrt{m^2+42m+9})\}$,
\\[3pt]
$\{-1^{m+3},1^{5},\frac{1}{2}(m-2
\pm\sqrt{m^2+40m+16})\}$.
\end{tabular}
\\[5pt]
$
A_7=\left[
\begin{array}{ccc}
R_{2m}& J        & J \\
J     & \! J\!-\! I_\ell\! & O \\
J     & O        & \! -R_{2k}\!
\end{array}
\right]
$\hspace{-3pt}
\begin{tabular}{ll}
($m\geq 1,~(\ell,k)=
(3,3)$, or $(4,2)$) 
with spectra
\\[3pt]
$\{-1^{m+4}, 1^{m+3},
\frac{1}{2}(1\pm\sqrt{8m+1}\}$,
\\[3pt]
$\{-1^{m+4}, 1^{m+2}, 1\pm 2\sqrt{4m+1}\}$.
\end{tabular}
\\[5pt]
$
A_{8}=\left[
\begin{array}{cccc}
R_2 & J & \1  & O \\
J   & \!I_m\!-\!J\! & \0  & J \\
\1^\to\! & \0^\to & 0 & \0^\to \\
O & J & \0 & -R_4 
\end{array}
\right]
$\hspace{-3pt}
\begin{tabular}{ll}
($m\geq 1$) with spectrum 
\\[3pt]
$\{-1^{3},1^{m+2},\frac{1}{2}(1-m\pm\sqrt{m^2+22m+9})\}$. 
\end{tabular}
\\[5pt]
$
A_{9}=\left[
\begin{array}{cccc}
R_{2m} & J & J & \0 \\
J   & R_{2} & O & \1 \\
J & O & -R_2 & \0 \\ 
\0^\to & \1^\to & \0^\to & 0 
\end{array}
\right]
$\hspace{-3pt}
\begin{tabular}{ll}
($m\geq 1$) with spectrum 
$\{-1^{m+2},1^{m+1},\frac{1}{2}(1\pm\sqrt{32m+9})\}$. 
\end{tabular}
\\[5pt]
$
A_{10}=\left[
\begin{array}{cccc}
\! J\!-\! I_m\!  & J & O & O \\
J & O  & J & \!J\!-\! I_\ell\! \\
O & J & \!I_m\!-\!J  & O \\ 
O & \! \! J\!-\!I_\ell\! & O & O 
\end{array}
\right]
$\hspace{-3pt}
\begin{tabular}{ll}
($(m,\ell)=(3,4)$, or $(4,3)$),
with spectra 
\\[3pt]
$\{-1^{6},1^{6},\pm 6\}$,
$\{-1^{6},1^{6},\pm 6\}$.
\end{tabular}
\\[5pt]
$A_{11}=\left[
\begin{array}{cccc}
\! J\!-\!I_m\!  & J & J & O \\
J & \! I_m\!-\!J\!  & O & J  \\
J & O & O & \! J\!-\!I_\ell\! \\
O & J & \! J\!-\!I_\ell\! & O
\end{array}
\right]
$\hspace{-3pt}
\begin{tabular}{ll}
($(m,\ell)=(3,4)$, or $(4,3)$)
with spectra
\\[3pt]
$\{-1^{6},1^{6},\pm 3\sqrt{5})\}$,
$\{-1^{6},1^{6},\pm 2\sqrt{13}\}$.
\end{tabular}
\\[5pt]
$
A_{12}=\left[
\begin{array}{cccc}
\! J\!-\! I_m\! & J & J & O \\
J & \! I_\ell\!-\!J\!     & O & J \\
J & O &   \! J\!-\!I_k\!    & J\\
O & J & J & \! I_j\!-\!J\!
\end{array}
\right]
$\hspace{-3pt}
\begin{tabular}{ll}
($(m,\ell,k,j)=(6,3,3,6)$, $(6,6,3,3)$, $(6,4,3,4)$, 
\\[3pt]
or
$(4,4,4,4)$),
with spectra 
$\{-1^{8},1^{8},\pm\sqrt{109}\}$,
\\[3pt]
$\{-1^{8},1^{8},\pm 10\}$,
$\{-1^{7},1^{8},\frac{1}{2}(-1\pm 3\sqrt{41})\}$,
\\[3pt]
$\{-1^{7},1^{7},\pm{9}\}$.
\end{tabular}
\\[5pt]
$
A_{13}=\left[
\begin{array}{cccc}
\! J\!-\! I_m\! & J              & O               & \0 \\
J               & \!-R_{2\ell}\! & J               & \1 \\
O               & J              & \! I_4\!-\! J\! & \0 \\ 
\0^\to          & \1^\to         & \0^\to          & 0 
\end{array}
\right]
$\hspace{-3pt}
\begin{tabular}{ll}
($(m,\ell)=(6,2)$, or $(5,3)$),
with spectra 
\\[3pt]
$\{-1^{7},1^{6},\frac{1}{2}(1\pm\sqrt{241})\}$,
$\{-1^{7},1^{7},\pm 6\sqrt{2}\}$.
\\[5pt]
\end{tabular}
\\[5pt]
$
A_{14}=\left[
\begin{array}{ccc}
\! J\!-\! I_m\! & J & O \\
J & \!-R_{2\ell}\! & J \\
O & J & \!-R_{4}\!
\end{array}
\right]
$
\hspace{-3pt}
\begin{tabular}{ll}
($(m,\ell)=(6,2)$, or $(5,3)$) 
with spectra
\\[3pt]
$\{-1^{7}, 1^{5}, 1\pm 2\sqrt{13}\}$,
$\{-1^7, 1^6, \frac{1}{2}(1\pm\sqrt{249}\}$.
\end{tabular}
\\[5pt]
$A_{15}=\left[
\begin{array}{cccc}
\! J\!-\!I_m\!  & J & J & O \\
J & \! I_\ell\!-\!J\! & O & J \\
J & O & \!R_{2k}\! & J \\
O & J & J & \! -R_{2j}
\end{array}
\right]
$\hspace{-3pt}
\begin{tabular}{ll}
($(m,\ell,k,j)=(3,3,3,3)$, $(4,3,3,2)$,
or $(4,4,2,2)$)
\\[3pt]
with spectra
$\{-1^{8},1^{8},\pm\sqrt{85}\}$,
$\{-1^{8},1^{7},\frac{1}{2}(1\pm\sqrt{313})\}$,
\\[3pt]
$\{-1^{7},1^{7},\pm\sqrt{73}\}$.
\end{tabular} 
\\[5pt]
$A_{16}=\left[
\begin{array}{cccc}
R_2 & J & J & O \\
J & -R_2 & O & J  \\
J & O & O & \!I_3\! \\
O & J & \!I_3\! & O
\end{array}
\right]
$\hspace{-3pt}
with spectrum $\{-1^{4},1^{4},\pm\sqrt{17}\}$. 
\\[5pt]
$
A_{17}=\left[
\begin{array}{cccccc}
R_2 & J & J & \1 & O &  \0 \\[-2pt]
J & -R_2 & O & \0 & J &  \1 \\[-2pt] 
J & O & R_2 & \0 &  J & \0 \\ 
\1^\to & \0^\to & \0^\to & 0 & \0^\to  & 0 \\[-2pt]  
O & J & J & \0 & -R_2 & \0  \\
\0^\to & \1^\to & \0^\to & 0 & \0^\to & 0 \\  
\end{array}
\right]
$
\hspace{-3pt}
with spectrum $\{-1^4, 1^4, \pm 2\sqrt{5}\}$.
\\[5pt]
$
A_{18}=\left[
\begin{array}{cccc}
\! J\!-\!I_m\! & J & \1 & O \\
J   & -R_{2\ell} & \0 & J \\
\1^\to & \0^\to & 0 & \0^\to \\ 
O & J & \0 & -R_2 
\end{array}
\right]
$\hspace{-3pt}
\begin{tabular}{ll}
($(m,\ell)=(4,3)$, or $(3,4)$) with spectra
\\[3pt]
$\{-1^{6},1^{5},\frac{1}{2}(1\pm\sqrt{177})\}$,
$\{-1^6,1^6,\pm3\sqrt{5}\}$. 
\end{tabular}
\\[5pt]
$A_{19}=\left[
\begin{array}{ccccc}
R_2 & J & J & \1 & O \\
J & \! I_m\!-\!J\! & O & \0 & J \\
J & O & \!R_{2\ell}\! & \0 & J \\
\1^\to & \0^\to & \0^\to & 0 & \0^\to \\
O & J & J & \0 & -R_2 \\
\end{array}
\right]
$\hspace{-3pt}
\begin{tabular}{ll}
($(m,\ell)=(3,3)$, or $(4,2)$)
with spectra
\\[3pt]
$\{-1^{6},1^{6},\pm 2\sqrt{10}\}$,
$\{-1^{5},1^{6},\frac{1}{2}(-1\pm 3\sqrt{17})\}$.
\end{tabular} 
\end{thm}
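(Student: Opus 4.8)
The plan is to use the reformulation that $G^\sigma\in\SG$ exactly when $M:=(A-I)(A+I)=A^2-I$ has rank at most $2$, with ``two eigenvalues unequal to $\pm1$'' meaning $\mathrm{rank}(M)=2$. Here $M$ is symmetric with integer entries and $M_{ii}=\deg(i)-1$, so this rank bound is extremely restrictive and drives the whole argument. Using the results recalled from \cite{HT}, I may assume $G^\sigma$ is connected, non-complete, and (since the disconnected and bipartite cases are already settled) non-bipartite; and using \cite{CHVW} it suffices to produce the genuinely signed graphs that are not switching-isomorphic to an unsigned graph in $\SG$ or to its negative.

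First I would establish a block (equitable-partition) structure. Partition the vertices into classes of mutual twins, i.e. vertices whose rows of $A$ agree outside the class. The structural claim is that, after switching, each class induces either a positive complete graph $J-I$, a negative complete graph $I-J$, a signed graph with $A^2=I$ (a signed perfect matching, hence a relabelling of $\pm R_{2m}$), or a single vertex, and that distinct classes are joined uniformly by $J$, $O$, or $\pm\1$. The point of this reduction is that each such block satisfies $A^2=I$ on $\1^\perp$, so every vector supported on one class and orthogonal to $\1$ there is, by uniformity of the external join, a global eigenvector of $A$ with eigenvalue $\pm1$. Consequently all eigenvalues outside $\{\pm1\}$ — in particular the two exceptional ones — must be carried by the quotient matrix $B$ of the partition, and the problem collapses to a finite combinatorial one about small signed quotient graphs.

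Next I would enumerate the admissible quotients. For each arrangement of block types and uniform joins I would compute the characteristic polynomial of the quotient $B$, impose that all but two of its roots equal $\pm1$, and use eigenvalue interlacing to bound the number of classes and the permitted class sizes, together with integrality of the spectrum to constrain the remaining sizes. This is what separates the outcome into families retaining one or two free block sizes (the matrices $A_1$--$A_9$) and the fully determined matrices $A_{10}$--$A_{19}$, where integrality of the two exceptional eigenvalues, or of the entire spectrum, forces the parameters down to the few tuples listed.

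The main obstacle is the size and delicacy of this enumeration rather than any single inequality. One must verify that the list of admissible block types and joins is complete, that the symmetries of switching and negation are correctly factored out so that no family is counted twice or omitted, and that every surviving parameter tuple genuinely yields $\mathrm{rank}(A^2-I)=2$ with the stated spectrum. The most error-prone point is separating the finitely many sporadic solutions from the parametrized ones, since this rests on case-by-case number-theoretic conditions — when $\tfrac12(\cdots\pm\sqrt{\cdots})$ is an admissible eigenvalue and when the multiplicities of $+1$ and $-1$ add up correctly for each quotient pattern.
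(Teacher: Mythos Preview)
Your reduction to an equitable ``twin-class'' partition is the step that fails. The structural claim that distinct classes are joined uniformly by $J$, $O$, or $\pm\1$ is contradicted by several of the matrices in the theorem itself: $A_{10}$ and $A_{11}$ have off-diagonal blocks $J-I_\ell$, and $A_{16}$ has an off-diagonal block $I_3$. In these examples the vertices within a displayed block are \emph{not} mutual twins (their rows differ outside the block), so your twin partition would be forced to split such blocks into singletons. The quotient then has unbounded size (for $A_{11}$ with $\ell=4$ it already has ten classes), and the promised ``finite combinatorial problem about small signed quotient graphs'' never materialises. Even setting the counterexamples aside, the claim that each twin class must induce one of $J-I$, $I-J$, $\pm R_{2m}$, or a singleton is asserted without proof; deriving this from $\mathrm{rank}(A^2-I)=2$ is precisely the hard structural work you have skipped.

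The paper's route is quite different and does not pass through a global module decomposition. It fixes a maximal clique $C$ of order $\geq 3$ (which exists since $G^\sigma$ is non-bipartite with no long odd cycles), and uses the rank bound on the $|C|\times n$ strip of $A$ to show that the rows indexed by $C$ fall into one of three explicit formats (Lemma~\ref{format}). This anchors the whole analysis: the remaining vertices split into sets $X$, $Y$, $Z$ according to their adjacency to $C$, and the constraints come from computing $\det(NN^\top-I)$ for carefully chosen $3\times 3$ and $4\times 4$ row-selections $N$, together with forbidden-subgraph arguments from interlacing. The equitable partitions and quotient matrices only enter \emph{after} this local analysis has pinned down the structure of $G^\sigma_X$, $G^\sigma_Y$ and the bipartite pattern between them; at that point the quotients are genuinely small ($3\times 3$ to $6\times 6$) and the parameter search is tractable. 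Your outline inverts this order, assuming the block structure that the paper spends Sections~5--12 establishing.
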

\begin{proof}
To see that each of the above matrices has the given spectrum, we use the same method as used in \cite{CHVW} and \cite{HT}.
For each matrix $A$ the presented block structure gives an equitable partition
(that is, each block has constant row and column sums).
Part of the spectrum of $A$ is the spectrum of the quotient matrix $Q$ (containing the row sums of the blocks), and it is straightforward to check that each $Q$ has just two eigenvalues different from $\pm 1$. 
The remaining eigenvalues of $A$ remain unchanged if we add or subtract an all-one block $J$ to some of the blocks.
One easily verifies that this can be done in such a way that we obtain a matrix with all eigenvalues equal to $\pm 1$.
The proof that the list is complete comprises the remainder of this paper.
\end{proof} 

Note that the above list is not free from overlap.
For example the negatives of $A_2$, $A_3$ and $A_4$
can alo be obtained by interchanging $m$ and $\ell$, and several cases with symmetric spectrum are switching isomorphic with their negatives.

Also note that $A_5$ with $k=1$ corresponds to the three unsigned graphs in $\SG$ given in Theorem~1$(v)$ of~\cite{CHVW}. 
Thus these three sporadic graphs from \cite{CHVW} are in fact part of an infinite family of signed graphs in $\SG$.

\section{Preliminaries}\label{pre}

We start with some results from \cite{HT}.

\begin{lem}\label{-1}
If a signed graph $G^\sigma$ has smallest eigenvalue at least $-1$, 
then the underlying graph $G$ is a disjoint union of complete graphs.
\end{lem}

The following two results are a bit more general than Proposition~2.3 and Theorem~2.4 in \cite{HT}.
The proofs, however, are basically the same.

\begin{prp}\label{but1}
If $G^\sigma$ is connected and all but at most one eigenvalues are in the interval $[-1,1]$,
then $G^\sigma$ or $G^{-\sigma}$ is switching isomorphic with an unsigned complete graph.
\end{prp}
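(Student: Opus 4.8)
The plan is to reduce to a situation where Lemma~\ref{-1} forces the underlying graph to be complete, and then to read off the entire spectrum from two trace identities. Since $A$ is symmetric, all its eigenvalues are real, and the hypothesis says that at most one of them lies outside $[-1,1]$. If that exceptional eigenvalue (when it exists) is larger than $1$, then every eigenvalue of $A$ is at least $-1$; if instead it is smaller than $-1$, the same is true for $-A$, the adjacency matrix of $G^{-\sigma}$. Hence, after possibly replacing $G^\sigma$ by its negative, I may assume that the smallest eigenvalue of $A$ is at least $-1$ while all but at most one of the eigenvalues are at most $1$. By Lemma~\ref{-1} the underlying graph is then a disjoint union of complete graphs, and since $G^\sigma$ is connected it must be a single $K_n$.

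Next I would pin down the spectrum exactly. Order the eigenvalues $\lambda_1\ge\cdots\ge\lambda_n$. By the reduction, $-1\le\lambda_i\le 1$ for every $i\ge 2$, so $\lambda_i^2\le 1$ there, while $\lambda_1\ge 0$ because $\sum_i\lambda_i=\operatorname{tr}A=0$. Using in addition $\operatorname{tr}(A^2)=2\binom n2=n(n-1)$, I obtain on one hand $\lambda_1^2=n(n-1)-\sum_{i\ge2}\lambda_i^2\ge(n-1)^2$, hence $\lambda_1\ge n-1$; and on the other hand $\lambda_1=-\sum_{i\ge2}\lambda_i\le\sum_{i\ge2}(-\lambda_i)\le n-1$, using $\lambda_i\ge-1$. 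Therefore $\lambda_1=n-1$ and both chains of inequalities are tight simultaneously, which forces $\lambda_i=-1$ for all $i\ge2$. Thus $A$ has spectrum $\{\,n-1,\,(-1)^{\,n-1}\,\}$, exactly that of the unsigned $K_n$.

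Finally I would recover the signature. Since $-1$ is an eigenvalue of multiplicity $n-1$, the matrix $A+I$ is positive semidefinite of rank $1$, so $A+I=xx^{\top}$ for some real vector $x$. Reading the diagonal gives $x_i^2=(A+I)_{ii}=1$, so $x\in\{\pm1\}^n$. Switching on the set $\{i:x_i=-1\}$, that is, conjugating by $D=\operatorname{diag}(x)$, yields $D(A+I)D=(Dx)(Dx)^{\top}=\1\1^{\top}=J$, whence $DAD=J-I$, the adjacency matrix of the unsigned complete graph. I expect the only real subtlety here to be bookkeeping: correctly tracking the sign of the exceptional eigenvalue so as to decide whether one keeps $G^\sigma$ or passes to $G^{-\sigma}$, and checking that the two trace bounds become equalities at once. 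The structural work is done by Lemma~\ref{-1}; once the underlying graph is complete the conclusion is essentially forced, so there is no serious analytic obstacle.
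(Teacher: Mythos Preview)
Your argument is correct and complete. The paper itself does not give a proof of this proposition but refers to \cite{HT}, noting that the argument there carries over; your approach --- reduce via Lemma~\ref{-1} to a signed $K_n$, pin down the spectrum from $\operatorname{tr}A=0$ and $\operatorname{tr}A^2=n(n-1)$, then exhibit the switching from the rank-one factorisation $A+I=xx^\top$ --- is exactly the natural line and fills in the omitted details cleanly.
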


\begin{thm}\label{disconnected}
If $G^\sigma$ is disconnected and has no isolated vertices or edges,
and all but two eigenvalues of $G^\sigma$ are in the interval $[-1,1]$,
then $G^\sigma$ is the disjoint union of two signed graphs each of which is switching isomorphic with an unsigned complete graph or its negative.
\end{thm}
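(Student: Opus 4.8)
The plan is to reduce the disconnected case to the connected one, component by component, using the two preliminary results. First I would write $G^\sigma$ as the disjoint union of its connected components $C_1,\dots,C_t$; since $G^\sigma$ is disconnected we have $t\ge 2$, and since there are no isolated vertices or edges each $C_i$ is a connected signed graph on at least three vertices. The spectrum of $G^\sigma$ is the multiset union of the spectra of the $C_i$, so all I really need is to understand how the eigenvalues lying outside $[-1,1]$ are distributed among the components.

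The crucial step is to show that \emph{every} component contributes at least one eigenvalue outside $[-1,1]$. Suppose for contradiction that some component $C=C_i$, with adjacency matrix $A_C$, has all of its eigenvalues in $[-1,1]$. Then its smallest eigenvalue is at least $-1$, so by Lemma~\ref{-1} the underlying graph of $C$ is a disjoint union of complete graphs; being connected, it must be a single $K_r$, and the absence of isolated vertices and edges forces $r\ge 3$. Counting edges gives $\mathrm{tr}(A_C^2)=2\binom r2=r(r-1)$, whereas all eigenvalues lying in $[-1,1]$ gives $\mathrm{tr}(A_C^2)=\sum_i\lambda_i^2\le r$. Hence $r(r-1)\le r$, i.e. $r\le 2$, contradicting $r\ge 3$. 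This second-moment argument, fed by Lemma~\ref{-1}, is where the actual content sits; everything else is bookkeeping.

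With this in hand the count closes at once: each of the $t$ components accounts for at least one of the at most two eigenvalues of $G^\sigma$ lying outside $[-1,1]$, so $t\le 2$, and together with $t\ge 2$ this yields $t=2$ with each component having exactly one eigenvalue outside $[-1,1]$. In particular each $C_i$ is connected and has all but one (hence all but at most one) of its eigenvalues in $[-1,1]$, so Proposition~\ref{but1} applies and shows that $C_i$ or its negative $C_i^{-\sigma}$ is switching isomorphic with an unsigned complete graph. Therefore $G^\sigma=C_1\cup C_2$ is the disjoint union of two signed graphs, each switching isomorphic with an unsigned complete graph or its negative, as required.

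I expect the main obstacle to be isolating exactly the right per-component statement: one must rule out a component whose entire spectrum lies inside $[-1,1]$, and the tempting shortcut of arguing only that a nontrivial connected graph has $\lambda_{\max}\ge 1$ is too weak, since it does not exclude the boundary value $1$. The trace identity $\sum_i\lambda_i^2=r(r-1)$ for a clique on $r\ge 3$ vertices is what produces a strict violation, and applying Lemma~\ref{-1} first (to pin the underlying graph down as a single clique) is precisely what makes that identity available. Once the counting forces $t=2$ with one exceptional eigenvalue per component, Proposition~\ref{but1} finishes the argument with no further work.
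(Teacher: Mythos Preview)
Your proof is correct. The paper does not spell out its own proof of this theorem but defers to~\cite{HT}, noting that the argument is ``basically the same'' as for Theorem~2.4 there; your approach---using Lemma~\ref{-1} together with a trace count to show that every component of order at least three must have an eigenvalue outside $[-1,1]$, and then applying Proposition~\ref{but1} to each of the two resulting components---is exactly the natural route suggested by the paper's placement of those two preliminary results immediately before the statement.
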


\begin{thm}
If $G^\sigma\in\SG$ is bipartite, then $G^\sigma$ is switching isomorphic with an unsigned bipartite graph.
\end{thm}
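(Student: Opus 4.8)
The plan is to use the symmetry of the spectrum of a bipartite graph to reduce the problem to a single genuinely signed case, which must then be shown to be balanced. Since $G^\sigma$ is bipartite, after ordering the two colour classes its adjacency matrix has the form $A=\left[\begin{smallmatrix} O & B\\ B^{\top} & O\end{smallmatrix}\right]$, where $B$ is the $\{0,\pm1\}$ biadjacency matrix. Hence the spectrum of $A$ is $\{\pm s_i\}\cup\{0^{\,t}\}$, where the $s_i$ are the nonzero singular values of $B$; in particular the eigenvalues different from $\pm1$ occur in pairs $\{\mu,-\mu\}$ together with the zeros. The hypothesis $G^\sigma\in\SG$ therefore leaves only three possibilities: all eigenvalues equal $\pm1$; at most two eigenvalues equal $0$ and the rest $\pm1$; or a single pair $\{\mu,-\mu\}$ with $\mu\neq 0,1$ and all other eigenvalues $\pm1$. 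In the first two possibilities (and also when $\mu<1$) every eigenvalue lies in $[-1,1]$. Applying Proposition~\ref{but1} to each connected component (whose eigenvalues form a subset of the spectrum, hence also lie in $[-1,1]$, so in particular all but at most one do) shows that each component, or its negative, is switching isomorphic to an unsigned complete graph; as the components are bipartite these complete graphs are $K_1$ or $K_2$, which are unsigned and bipartite. Thus $G^\sigma$ is switching isomorphic to an unsigned bipartite graph and we are done.

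It remains to treat a single exceptional pair $\{\mu,-\mu\}$ with $\mu>1$, so that $\pm\mu\notin[-1,1]$ while all remaining eigenvalues are $\pm1$; in particular all but the two eigenvalues $\pm\mu$ lie in $[-1,1]$. After deleting isolated vertices and isolated edges (which are unsigned, bipartite, and keep $G^\sigma$ in $\SG$), suppose $G^\sigma$ were still disconnected. Then Theorem~\ref{disconnected} would force it to be a disjoint union of two signed graphs each switching isomorphic to an unsigned complete graph or its negative; bipartiteness again restricts these to $K_1$ or $K_2$, whose spectra contain no eigenvalue of absolute value exceeding $1$, contradicting $\mu>1$. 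Hence $G^\sigma$ is connected, there are no zero eigenvalues, and $B$ is a square nonsingular matrix of order $r$ satisfying $BB^{\top}=I+c\,vv^{\top}$ and $B^{\top}B=I+c\,ww^{\top}$, where $c=\mu^2-1>0$ and $v,w$ are the unit singular vectors belonging to $\mu$.

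The genuinely substantial task is now to show that such a $G^\sigma$ is balanced, i.e.\ switching isomorphic to its underlying graph. Switching the rows according to the signs of $v$ and the columns according to the signs of $w$ does not change $BB^{\top}$ and $B^{\top}B$ except to replace $v$ and $w$ by their entrywise absolute values; since $G^\sigma$ is connected and $r\geq 2$ these are strictly positive, so we may assume $v,w>0$. Then the vector $\x$ with blocks $v$ and $w$ is strictly positive and satisfies $A\x=\mu\x$ with $\mu=\rho(A)$. Writing $|A|$ for the adjacency matrix of the underlying (connected, hence irreducible) graph, we have $|A|\x-\mu\x=(|A|-A)\x\geq \0$ entrywise. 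If $B$ had a negative entry, this inequality would be strict in some coordinate, and the subinvariance property of irreducible nonnegative matrices would give $\theta:=\rho(|A|)>\mu$; conversely $\theta=\mu$ forces $|A|\x=\mu\x$, hence $A=|A|$ and $B\geq \0$. Thus $G^\sigma$ is balanced if and only if $\theta=\mu$.

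Since one always has $\theta=\||B|\|\geq\|B\|=\mu$, the main obstacle is to establish the reverse inequality $\theta\leq\mu$. Equivalently, writing $|B||B|^{\top}=BB^{\top}+E$ with $E\geq O$ entrywise, $E_{ii}=0$ and $E_{ij}$ twice the number of sign-conflicting common neighbours of the row-vertices $i,j$, one must show $E=O$, i.e.\ that every column of $B$ is sign-constant (which is precisely balance). This is where the rank-one structure must be used in full: the identities $d_i=(BB^{\top})_{ii}=1+c\,v_i^{\,2}$ and $(BB^{\top})_{ij}=c\,v_iv_j\in\mathbb{Z}_{>0}$ for $i\ne j$ force $c\,v_i^{\,2}=d_i-1\in\mathbb{Z}_{\ge 0}$ and $\sqrt{(d_i-1)(d_j-1)}\in\mathbb{Z}$, and together with the analogous column relations these integrality constraints pin the common-neighbour pattern down so tightly that no negative cycle can occur. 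I expect carrying out this analysis --- showing that the integrality and rank-one conditions leave no room for an unbalanced signature, most likely via a case distinction on the degree pattern $(d_i)$ --- to be the technical heart of the argument; once $E=O$ is established, $\theta=\mu$, so $G^\sigma$ is balanced and hence switching isomorphic to an unsigned bipartite graph.
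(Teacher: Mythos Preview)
This theorem is not proved in the present paper: it appears in Section~\ref{pre} as a result quoted from~\cite{HT}, without argument. So there is no in-paper proof to compare your attempt against, and I can only assess the argument on its own merits.

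Your reduction is sound. The spectral symmetry of a bipartite signed graph, together with Proposition~\ref{but1} and Theorem~\ref{disconnected}, correctly isolates the single nontrivial case of a connected $G^\sigma$ with spectrum $\{\mu,1^{a},(-1)^{a},-\mu\}$, $\mu>1$ (trace zero forces the multiplicities of $1$ and $-1$ to agree, and $n\ge 4$ gives $a\ge 1$). The genuine gap is the final step. You set up the Perron--Frobenius comparison between $A$ and $|A|$ and correctly reduce ``$G^\sigma$ is balanced'' to the inequality $\rho(|A|)\le\mu$, equivalently $E=O$; but then you do not prove this. Saying that the integrality relations ``pin the common-neighbour pattern down so tightly that no negative cycle can occur'' and that you ``expect carrying out this analysis'' to finish the job is not a proof. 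Nothing you have actually established excludes an unbalanced signing, and it is not at all clear that the degree/integrality constraints you list suffice without substantial further case analysis.

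There is a direct route that bypasses this entirely. In your remaining case $\lambda_2(G^\sigma)=1$ and $\lambda_{n-1}(G^\sigma)=-1$, so by interlacing every induced subgraph $H$ satisfies $\lambda_2(H)\le 1$ and $\lambda_{|H|-1}(H)\ge -1$. Now an unbalanced even cycle $C_{2m}$ has eigenvalues $2\cos\!\big((2k{+}1)\pi/(2m)\big)$, each of multiplicity two, so $\lambda_2(C_{2m})=2\cos(\pi/(2m))\ge\sqrt{2}>1$ for every $m\ge 2$; hence no unbalanced even cycle occurs as an induced subgraph of $G^\sigma$. Since a signed graph is balanced if and only if every induced (chordless) cycle is positive, and all cycles in a bipartite graph are even, this already forces $G^\sigma$ to be balanced. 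This replaces your unfinished Perron--Frobenius step with a short interlacing argument and completes the proof.
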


The bipartite graphs in $\SG$ are described in \cite{vDS,CHVW,HT}.

\begin{thm}\label{complete}
Assume $K_n^\sigma$ is a signed complete graph of order $n$ for which the adjacency matrix
has all but two eigenvalues in the interval $[-1,1]$.
Then $K_n^\sigma$ is switching isomorphic with a signed graph with adjacency matrix
\[\J_{m,\ell} =
\left[
\begin{array}{cc}
\!\!\!J-I_m & J
\\
J &\! -J +I_\ell
\end{array}
\right]
\]
with $n=m+\ell$, $m,\ell\geq 2$
and spectrum
$\{-1^{m-1},1^{\ell-1},
\frac{1}{2}(m-\ell\pm\sqrt{m^2+\ell^2+6m\ell+4m+4\ell+4})\}$.
\end{thm}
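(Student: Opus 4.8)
The plan is to reduce the statement to the spectral analysis of symmetric $\pm 1$ matrices with a single negative eigenvalue, and then to an interlacing argument on a small quotient. Write $A$ for the adjacency matrix of $K_n^\sigma$; it has zero diagonal and off-diagonal entries $\pm 1$, so $A+I$ and $I-A$ are symmetric matrices with all entries $\pm 1$ and $1$'s on the diagonal, on which switching acts as $M\mapsto DMD$ with $D$ a $\pm 1$ diagonal matrix. Let $p$ and $q$ be the numbers of eigenvalues of $A$ exceeding $1$ and below $-1$, so the hypothesis reads $p+q=2$. First I would show $p=q=1$. If $q=0$ then $A+I\succeq 0$, and a positive semidefinite $\pm 1$ matrix with unit diagonal is the Gram matrix of unit vectors with pairwise inner products $\pm 1$, hence has rank $1$; thus $A=\x\x^\to-I$ for a $\pm 1$ vector $\x$, with spectrum $\{n-1,-1^{\,n-1}\}$, contradicting $p+q=2$. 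Applying the same to $-A$ (again a signed complete graph) excludes $p=0$, so $p=q=1$; in particular $A+I$ has exactly one negative eigenvalue.

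The core step is a structure theorem: a symmetric $\pm 1$ matrix $B$ with unit diagonal and exactly one negative eigenvalue is, up to switching, described by a partition of the index set into a set $S$ and classes $C_1,\dots,C_r$ with $B_{ij}=-1$ precisely when $i,j$ lie in distinct $C_k$'s. I would derive this from the Gram representation of $B$ in the Lorentzian space $\mathbb{R}^{\,n_+,1}$: the columns are unit spacelike vectors with pairwise products $\pm 1$, and after switching a base vector $v_1$ to satisfy $\langle v_1,v_j\rangle=1$ the differences $v_j-v_1$ are null vectors with mutual products in $\{0,-2\}$; since two null vectors are orthogonal iff parallel, parallelism sorts the nonzero differences into the classes $C_k$, while the vanishing differences form $S$. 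Applied to $B=A+I$ this says that, after switching, the negative edges of $A$ form a complete $r$-partite graph together with an isolated set $S$: equivalently $V$ is partitioned into $S$ and the $C_k$, with a positive clique inside each part, positive edges between $S$ and every $C_k$, and negative edges between distinct $C_k$'s.

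This partition is equitable, and since every part induces a positive clique all of its internal eigenvectors give eigenvalue $-1$; hence the spectrum of $A$ consists of $-1$ with high multiplicity together with the eigenvalues of the quotient. A short computation rewrites the symmetrized quotient as $\widehat{Q}=\Delta-\vv\vv^\to$, where $\Delta=\mathrm{diag}(2|S|-1,\,2|C_1|-1,\dots,2|C_r|-1)\succeq I$ and $\vv=(-\sqrt{|S|},\sqrt{|C_1|},\dots,\sqrt{|C_r|})^\to$. Because $\Delta\succeq I$ and $\vv\vv^\to$ has rank $1$, interlacing shows that every eigenvalue of $\widehat{Q}$ except the smallest is at least $1$; combining this with $p=q=1$ forces all eigenvalues of $\widehat{Q}$ other than one value $>1$ and one value $<-1$ to equal exactly $1$. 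Consequently the $n-2$ middle eigenvalues of $A$ are all $\pm 1$, as the theorem asserts. Finally, the multiplicity of the eigenvalue $1$ of $\widehat{Q}$ is governed by the number of singleton classes, which limits how many parts can have size $\ge 2$; a short case check then matches each surviving possibility to the two-block form, so that $A$ is switching isomorphic to $\J_{m,\ell}$, and reading off the multiplicities of $\pm 1$ gives the parameters $m,\ell\ge 2$ (the degenerate values corresponding to at most one eigenvalue outside $[-1,1]$, which are excluded) together with the stated spectrum.

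I expect the main obstacle to be the structure theorem. The Lorentzian argument must be made fully rigorous, and in particular the degenerate (zero) difference vectors—exactly the source of the set $S$—are easy to overlook and must be handled carefully, as must the bookkeeping of how switching normalizes the base vector. Once the positive-clique partition is in hand, the passage to $\widehat{Q}=\Delta-\vv\vv^\to$, the interlacing step forcing the middle eigenvalues to be $\pm 1$, and the identification of $\J_{m,\ell}$ are essentially routine.
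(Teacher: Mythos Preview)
The paper does not contain its own proof of this theorem: Theorem~\ref{complete} is quoted in Section~\ref{pre} as a preliminary result from~\cite{HT}, so there is nothing in the present paper to compare your argument against directly.

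That said, your approach is correct and appears to be genuinely different from the techniques used elsewhere in this paper (and, most likely, in~\cite{HT}). The reduction to $p=q=1$ via the rank-$1$ characterisation of psd $\pm 1$ matrices with unit diagonal is clean. The Lorentzian structure theorem is valid: the key fact that two null vectors in signature $(p,1)$ with inner product $0$ are parallel holds, and since $A+I$ has exactly one negative eigenvalue (possibly also some zero eigenvalues, which you rightly flag) the Gram representation lives in the correct space. Your identity $\widehat Q=\Delta-\vv\vv^\top$ with $\Delta\succeq I$ checks out, and the rank-one interlacing bound then forces all but the smallest eigenvalue of $\widehat Q$ to be $\geq 1$, so that the hypothesis pins the middle ones at exactly~$1$. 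This simultaneously proves that the $n-2$ middle eigenvalues of $A$ are $\pm 1$, which the bare hypothesis does not give for free.

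The one place your outline undersells the work is the ``short case check''. What is actually needed is: (a) if two or more parts of the partition have size $\geq 2$, the kernel of $\widehat Q-I$ has dimension at most $u-1$ (with $u$ the number of singleton parts), which is strictly less than the required $r-1$, a contradiction; (b) if all parts are singletons one gets $I-J$, contradicting $p=q=1$; (c) if the unique big part is $S$ one reads off $\J_{|S|,r}$ directly, while if the big part is some $C_k$ one must re-run the structure theorem based at a vertex of $C_k$ (equivalently, switch the singleton $C_j$'s) to exhibit the configuration as $\J_{|C_k|,r}$. None of this is hard, but step~(c) in particular deserves to be written out rather than absorbed into a one-line remark.
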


\section{Restrictions}

In the remainder we will assume that $G^\sigma$ is a signed graph in $\SG$ of order $n$ 
which is connected, and not bipartite or complete.
Then, by Proposition~\ref{but1}, $G^\sigma$ has  two eigenvalues $r$ and $s$ different 
from $\pm 1$, and we may assume that $s<-1$ and $r>1$,
since otherwise $G^\sigma$ or its negative has smallest eigenvalue $-1$, so $G^\sigma$ 
would be disconnected or complete by Lemma~\ref{-1}.

An important tool is eigenvalue interlacing.
It follows that every induced signed subgraph of a graph $G^\sigma$ in $\SG$ has second largest eigenvalue at most 1,
and second smallest eigenvalue at least $-1$.
This excludes many subgraphs, including the path $P_n$ with $n\geq 6$ and the $n$-cycle with $n\geq 5$ (for every signing).
Some other forbidden induced signed subgraphs are presented in Figure~~\ref{forbidden}. 
\setlength{\unitlength}{1.5pt}
\begin{figure}[ht]
\begin{center}
\begin{picture}(30,30)(2,0)
\put(0,1){\circle*{3}}
\put(20,1){\circle*{3}}
\put(0,21){\circle*{3}}
\put(20,21){\circle*{3}}
\put(20,1){\line(0,1){20}}
\put(0,1){\line(0,1){20}}
\put(0,21){\line(1,0){20}}
\multiput(0,1)(4,0){5}{\line(1,0){2}}
\end{picture}
\begin{picture}(30,30)(-7,0)
\put(5,1){\circle*{3}}
\put(22,1){\circle*{3}}
\put(0,16.5){\circle*{3}}
\put(27,16.5){\circle*{3}}
\put(13.5,8){\circle*{3}}
\put(13.5,26.5){\circle*{3}}
\put(13.5,8){\line(0,1){19}}
\put(13.5,8){\line(4,-3){8}}
\put(13.5,8){\line(-4,-3){8}}
\put(5,1){\line(1,0){17}}
\put(13.5,8){\line(3,2){12}}
\put(13.5,8){\line(-3,2){12}}
\put(0,16.5){\line(1,0){27}}
\put(13.5,26.7){\line(4,-3){13.5}}
\put(13.5,26.7){\line(-4,-3){13.5}}
\end{picture}
\begin{picture}(30,30)(-18,0)
\put(5,1){\circle*{3}}
\put(22,1){\circle*{3}}
\put(0,16.5){\circle*{3}}
\put(27,16.5){\circle*{3}}
\put(13.5,26.5){\circle*{3}}
\multiput(5,1)(4,0){5}{\line(1,0){2}}
\put(5,1){\line(1,3){8.5}}
\put(5,1){\line(-1,3){5}}
\put(5,1){\line(4,3){22}}
\put(22,1){\line(-4,3){22}}
\put(0,16.5){\line(1,0){26.5}}
\put(22,1){\line(1,3){5}}
\put(13.5,26.7){\line(4,-3){13.5}}
\put(13.5,26.7){\line(-4,-3){13.5}}
\end{picture}
\caption{forbidden signed graphs; negative edges are dashed}\label{forbidden}
\end{center}
\end{figure}
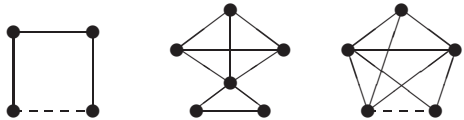
\setlength{\unitlength}{1.5pt}
\begin{figure}[ht]
\begin{center}
\begin{picture}(30,33)(20,0)
\put(5,1){\circle*{3}}
\put(22,1){\circle*{3}}
\put(0,16.5){\circle*{3}}
\put(27,16.5){\circle*{3}}
\put(13.5,26.5){\circle*{3}}
\put(5,1){\line(-1,3){5}}
\put(5,1){\line(1,0){17}}
\put(22,1){\line(1,3){5}}
\put(13.5,26.7){\line(4,-3){13.5}}
\put(13.5,26.7){\line(-4,-3){13.5}}
\end{picture}
\begin{picture}(30,33)(7,0)
\put(5,1){\circle*{3}}
\put(22,1){\circle*{3}}
\put(0,16.5){\circle*{3}}
\put(27,16.5){\circle*{3}}
\put(13.5,26.5){\circle*{3}}
\put(5,1){\line(1,0){17}}
\put(5,1){\line(-1,3){5}}
\put(0,16.5){\line(1,0){26.5}}
\put(22,1){\line(1,3){5}}
\put(13.5,26.7){\line(4,-3){13.5}}
\put(13.5,26.7){\line(-4,-3){13.5}}
\end{picture}
\begin{picture}(30,33)(-6,0)
\put(5,1){\circle*{3}}
\put(22,1){\circle*{3}}
\put(0,16.5){\circle*{3}}
\put(27,16.5){\circle*{3}}
\put(13.5,26.5){\circle*{3}}
\put(5,1){\line(4,3){22}}
\put(5,1){\line(-1,3){5}}
\put(0,16.5){\line(1,0){26.5}}
\put(22,1){\line(1,3){5}}
\put(13.5,26.7){\line(4,-3){13.5}}
\put(13.5,26.7){\line(-4,-3){13.5}}
\end{picture}
\begin{picture}(36,15)(-17,-3)
\put(0,10){\line(1,1){10}}
\put(0,10){\line(1,-1){10}}
\put(20,10){\line(-1,1){10}}
\put(20,10){\line(-1,-1){10}}
\put(20,10){\line(1,0){20}}
\put(10,0){\line(0,1){20}}
\put(0,10){\circle*{3}}
\put(20,10){\circle*{3}}
\put(40,10){\circle*{3}}
\put(10,0){\circle*{3}}
\put(10,20){\circle*{3}}
\put(30,10){\circle*{3}}
\end{picture}
\begin{picture}(36,15)(-37,-3)
\put(30,0){\line(0,1){20}}
\put(0,10){\line(1,1){10}}
\put(0,10){\line(1,-1){10}}
\put(10,20){\line(1,-1){20}}
\put(10,0){\line(1,1){20}}
\put(0,10){\circle*{3}}
\put(20,10){\circle*{3}}
\put(30,20){\circle*{3}}
\put(10,0){\circle*{3}}
\put(10,20){\circle*{3}}
\put(30,0){\circle*{3}}
\end{picture}
\caption{forbidden graphs for all signings}\label{forbidden-all}
\end{center}
\end{figure}
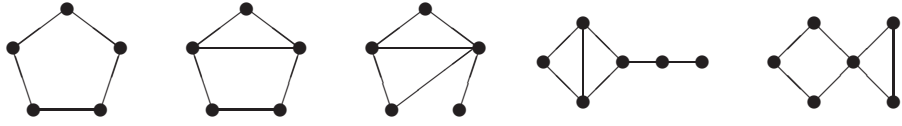

\noindent
Of course the signed graphs switching isomorphic with a graph in Figure~\ref{forbidden} and their negatives are also forbidden, 
but this does not include all possible signings.
For example, the unsigned $4$-cycle can occur.
Figure~\ref{forbidden-all} lists forbidden induced subgraphs for the underlying unsigned graph.
In other words, these graphs are forbidden for all signings.
Another useful fact is that if $A$ is the adjacency matrix of a signed graph in $\SG$,
then $A^2$ has eigenvalues $1$, $r^2$ and $s^2$, 
so $A^2-I$ is positive semi-definite (psd) and has rank 2.
The following Lemma is a  generalization of Lemma~2 from \cite{CHVW}.
The proof is the same.

\begin{lem}\label{deg1}
Suppose $G^\sigma\in\SG$ with adjacency matrix $A$, then
\\
$(i)$  if $G^\sigma$ has no isolated edges, then there is no vertex of degree $1$,
\\
$(ii)$ if $\x$ and $\y$ are two columns of $A$ such that $\x^\to\x = \x^\to\y$,
then $\x$ and $\y$ differ in at least three coordinate places, in particular no two columns (or rows) of $A$ are equal.
\end{lem}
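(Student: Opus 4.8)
The plan is to base both parts on the matrix $B = A^2 - I$, which, as noted above, is positive semi-definite (psd). Its entries are easy to read off: since every entry of $A$ lies in $\{-1,0,1\}$, the diagonal is $B_{kk} = (A^2)_{kk} - 1 = \deg(k) - 1$, while for $k \ne l$ the off-diagonal entry $B_{kl} = (A^2)_{kl}$ is the ordinary inner product of columns $k$ and $l$ of $A$. The two facts I will exploit repeatedly are that a psd matrix has nonnegative principal minors, and that a zero diagonal entry of a psd matrix forces the whole corresponding row and column to vanish.

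For part $(i)$, suppose some vertex $v$ has degree $1$. Then $B_{vv} = \deg(v) - 1 = 0$, so by psd-ness the entire $v$-th row of $B$ vanishes; in particular $(A^2)_{vj} = 0$ for all $j \ne v$. Writing $w$ for the unique neighbour of $v$, the only possibly nonzero term in $(A^2)_{vj} = \sum_k A_{vk}A_{kj}$ is $A_{vw}A_{wj}$, so $A_{vw}A_{wj} = 0$ for every $j \ne v$. Since $A_{vw} = \pm 1$, this gives $A_{wj} = 0$ for all $j \ne v$, i.e. $w$ is adjacent only to $v$. Hence $\{v,w\}$ spans an isolated edge, contradicting the hypothesis.

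For part $(ii)$, let $\x$ and $\y$ be columns $i$ and $j$ of $A$, so the hypothesis reads $\x^{\top}(\x - \y) = 0$. The crucial observation is that, because the entries lie in $\{-1,0,1\}$, a short case check shows each summand $\x_k(\x_k - \y_k)$ is nonnegative, and vanishes exactly when $\x_k = 0$ or $\x_k = \y_k$. Therefore the whole sum being zero forces $\x_k = 0$ at every coordinate $k$ where $\x$ and $\y$ differ; consequently $\y_k = \pm 1$ there, and $\|\x - \y\|^2 = d$, where $d$ is the number of differing coordinates. Applying psd-ness to the vector $e_i - e_j$ (the relevant standard basis vectors) gives $0 \le (e_i - e_j)^{\top} B (e_i - e_j) = \|\x-\y\|^2 - 2 = d - 2$, so $d \ge 2$; in particular $\x \ne \y$, which already yields the final assertion that no two columns (or rows) of $A$ are equal.

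It remains to exclude $d = 2$, and this is the step I expect to be the real obstacle. Here I will use the $2\times 2$ principal submatrix of $B$ indexed by $\{i,j\}$. Setting $a = \deg(i) = \x^{\top}\x = \x^{\top}\y$, one has $B_{ii} = a - 1$, $B_{ij} = a$, and $B_{jj} = \deg(j) - 1 = a + d - 1$ (the last because $\deg(j) = \y^{\top}\y = \x^{\top}\x + \|\x-\y\|^2 = a + d$). Its determinant equals $(a-1)(a+d-1) - a^2 = a(d-2) - (d-1)$, which for $d = 2$ is $-1 < 0$, contradicting the nonnegativity of the principal minors of the psd matrix $B$. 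Hence $d \ge 3$, completing the proof. The whole argument uses only that $B$ is psd, not that it has rank $2$; the two genuinely load-bearing steps are the sign observation $\x_k(\x_k-\y_k)\ge 0$ and the determinant evaluation at $d=2$.
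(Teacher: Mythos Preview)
Your proof is correct and follows the paper's approach: both parts rest on $A^2-I$ being positive semi-definite, and for $(ii)$ you use exactly the same $2\times 2$ principal minor, with your termwise inequality $\x_k(\x_k-\y_k)\ge 0$ merely making explicit the link between $\y^\top\y-\x^\top\x$ and the number of differing coordinates that the paper leaves implicit. The only cosmetic difference is in $(i)$: the paper picks a vertex $w$ at distance~$2$ from $v$ and shows the corresponding $2\times 2$ minor of $A^2-I$ equals $-1$, whereas you use the equivalent psd fact that a zero diagonal entry forces its whole row to vanish, concluding directly that $v$'s unique neighbour has no further neighbours.
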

\begin{proof}
$(i)$
Suppose $G^\sigma$ has a vertex $v$ of degree $1$ 
and a vertex $w$ at distance $2$ from $v$.
Then the $2\times 2$ submatrix of $A^2-I$ corresponding to $v$ and $w$ has a negative determinant which contradicts that $A^2-I$ is psd.
$(ii)$
In this case $A^2-I$ has a psd submatrix
\[ 
M=\left[
\begin{array}{cc}
\x^\to\x-1 & \x^\to\y \\ \y^\to\x & \y^\to\y-1
\end{array}
\right]
=
\left[
\begin{array}{cc}
\x^\to\x-1 & \x^\to\x \\ \x^\to\x & \y^\to\y-1
\end{array}
\right].
\]
So $\det(M)\geq 0$
which implies that $\y^\to\y > \x^\to\x+2$.
\end{proof}

\section{The possible formats}

Since $G^\sigma$ is not bipartite and contains no odd cycle of length 5 or more, $G^\sigma$ contains a triangle.

\begin{lem}\label{format}
Let $C$ be a maximal clique in $G^\sigma$ of size $c\geq 3$.
Then, up to vertex ordering, switching and taking the negative,
the $c\times n$ submatrix of $A$ corresponding to $C$ is one of the following:
\begin{enumerate}
\item[(i)]
$
 \left[
\begin{array}{c|c|c}
J-I_c
&
\begin{array}{c}
\phantom{-}J_{c_1\times x}\\-J_{c_2\times x}\\  O
\end{array}
&
O_{c\times z}
\end{array}
\right] \mbox{ with } c_1+c_2<c,\  c_1,c_2,x >0,\  z\geq 0,
$
\item[(ii)]
$
\left[
\begin{array}{c|c|c}
J-I_c
&
\begin{array}{cc}
J_{c_1\times x} & O
\\
O & J_{c_2\times y}
\end{array}
&
O_{c\times z}
\end{array}
\right] \mbox{ with } c_1+c_2=c,\  c_1,c_2,x >0,\  y,z \geq 0,
$
\item[(iii)]
$
\left[
\begin{array}{c|c|c}
\J_{c_1,c_2}
&
\begin{array}{cc}
J_{c_1\times x} & O
\\
O & J_{c_2\times y}
\end{array}
&
O_{c\times z}
\end{array}
\right]
$  with
$ c_1+c_2=c$,  $c_1,c_2\geq 2$, $x >0$, $y,z \geq 0$.
\end{enumerate}
\end{lem}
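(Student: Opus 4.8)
The plan is to exploit the constraint that $A^2-I$ is positive semi-definite of rank $2$, together with the structure forced by a \emph{maximal} clique $C$ of size $c\geq 3$. Let me think through how the columns of the $c\times n$ submatrix must look.

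The plan is to determine the $c\times c$ clique block first and then use a rank argument to control every column indexed by a vertex outside $C$. For the block itself: the induced subgraph on $C$ is a signed complete graph $K_c^\sigma$, and since $G^\sigma\in\SG$, interlacing gives it second largest eigenvalue $\le 1$ and second smallest eigenvalue $\ge -1$, so all but at most two of its eigenvalues lie in $[-1,1]$. By Proposition~\ref{but1} and Theorem~\ref{complete}, up to switching, reordering and taking the negative the clique block $A_C$ is therefore either $J-I_c$ (the all-positive clique, with at most one eigenvalue outside $[-1,1]$) or $\J_{c_1,c_2}$ with $c_1,c_2\ge 2$ and $c_1+c_2=c$. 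This already separates formats (i)--(ii), where $A_C=J-I_c$, from format (iii), where $A_C=\J_{c_1,c_2}$.

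The main structural step is a rank-$2$ argument. Write $\mathbf{w}_u\in\{-1,0,1\}^c$ for the column of a vertex $u\notin C$ restricted to the rows of $C$. Splitting the length-two walks according to whether the midpoint lies in $C$ or outside yields $(A^2-I)|_C=(A_C^2-I_c)+\sum_{u\notin C}\mathbf{w}_u\mathbf{w}_u^\top$, a sum of two positive semidefinite matrices. Since $A^2-I$ is psd of rank $2$, its principal submatrix $(A^2-I)|_C$ has rank at most $2$, and for psd matrices the column space of a sum is the sum of the column spaces; hence the column space of $A_C^2-I_c$ together with all the $\mathbf{w}_u$ spans a space of dimension at most $2$. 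A direct computation gives $A_C^2-I_c=(c-2)J_c$ in the all-positive case (column space $\operatorname{span}(\1)$), and in the $\J_{c_1,c_2}$ case a matrix whose column space is the span of the two indicator vectors $\1_{C_1},\1_{C_2}$ of the parts, of rank exactly $2$ (checking the reduced $2\times2$ form is nonsingular uses $c_1,c_2\ge 2$). Consequently every $\mathbf{w}_u$ lies in $\operatorname{span}(\1,\mathbf{w}_{u_0})$ for a fixed nonzero reference column $u_0$ (all-positive case) or in $\operatorname{span}(\1_{C_1},\1_{C_2})$ ($\J$ case); in particular each $\mathbf{w}_u$ is constant on each block of a common partition of $C$.

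To finish, combine this linear constraint with three elementary facts: each $\mathbf{w}_u$ has entries in $\{-1,0,1\}$; by maximality of $C$ every $\mathbf{w}_u$ has at least one zero entry (otherwise $C\cup\{u\}$ would be a larger clique); and switching a single outside vertex negates its column $\mathbf{w}_u$ while leaving $A_C$ and every other outside column restricted to $C$ unchanged. Enumerating the integer vectors of the allowed form that carry a zero, and normalizing signs by such switchings, leaves only a few column types. In the $\J_{c_1,c_2}$ case these are $(+1\text{ on }C_1,\ 0)$, $(0,\ +1\text{ on }C_2)$ and $\0$, which is exactly format (iii). In the all-positive case a short split on the reference column $\mathbf{w}_{u_0}$ settles it: if it carries both signs one gets the $c_1$-positive / $c_2$-negative pattern of format (i), with the common zero block forcing $c_1+c_2<c$; if it carries only one sign one gets the two all-positive attachment patterns of format (ii), with $c_1+c_2=c$.

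I expect the rank-$2$ step to be the crux: obtaining the identity $(A^2-I)|_C=(A_C^2-I_c)+\sum_u\mathbf{w}_u\mathbf{w}_u^\top$ and correctly identifying the column space of $A_C^2-I_c$, especially confirming it is full ($2$-dimensional) in the $\J_{c_1,c_2}$ case. Once the $\mathbf{w}_u$ are confined to a $2$-dimensional space the remaining enumeration is routine; notably, this route needs none of the forbidden subgraphs of Figures~\ref{forbidden}--\ref{forbidden-all}, which presumably supply an alternative, more combinatorial derivation of the same column restrictions.
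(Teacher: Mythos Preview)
Your proposal is correct and follows essentially the same route as the paper. Both classify the clique block via interlacing and Theorem~\ref{complete}, then use that $NN^\top-I=(A^2-I)|_C$ is psd of rank at most $2$ to confine every outside column to the $2$-dimensional space spanned by $\1$ and a reference column (respectively by $\1_{C_1},\1_{C_2}$), and finally enumerate the $\{-1,0,1\}$-vectors in that space having a zero entry; the paper merely packages the rank step as $\mathrm{rank}[\,\1\mid L\,]\le 2$ via an explicit factorization $NN^\top-I=MM^\top$ rather than via column spaces of psd sums, and it too does not invoke the forbidden subgraphs for this lemma.
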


\begin{proof}
The clique $C$ is a signed complete graph which has, by interlacing, all but at most two eigenvalues in the interval $[-1,1]$.
By Lemma~\ref{-1} the adjacency matrix of $C$ is, up to switching and taking the negative, equal to  $J-I_c$, or $\J_{c_1,c_2}$,
where  $c_1+c_2=c$ and $c_1,c_2\geq 2$.
So the matrix $N$ consisting of the rows of $A$ corresponding to $C$ takes the form
$[\,J-I_c~|~L\,]$ or $[\,\J_{c_1,c_2}~|~\L\,]$.
Since $NN^\top-I$ is a principal submatrix of $A^2-I$,  the rank of $NN^\top-I$ is at most 2.
In case $N=[\,J-I_c~|~L\,]$ we get
\[
NN^\top -I = (c-2)J + LL^\top = MM^\top,
\]
where $M= [\,(\sqrt{c-2})\,\1~|~ L\,]$.
This implies that $M$, and hence $[\,\1~|~ L\,]$ has rank at most 2.
Let $\vv\neq\0$ be a column of $L$ ($L\neq O$, since $G^\sigma$ is connected).
Then $\vv$ has at least one entry equal to $0$ (because $C$ is maximal), and we may assume that $\vv$ has at least one positive entry
(otherwise we switch).
From rank$([\,\1~|~ L]\,) \leq 2$ it follows that every column of $L$ is equal to $\vv$, $\0$. or $\1-\vv$.
If $\vv$ has positive and negative entries, then $L$ has the form given in $(i)$, otherwise we get case $(ii)$.
In case $N=[\,\J_{c_1,c_2}~|~\L\,]$ we get
\[
NN^\top-I=
\left[\begin{array}{cc}
(c-2)J & (c_1-c_2)J_{c_1\times c_2}\\
(c_1-c_2)J_{c_2\times c_1} & (c-2)J
\end{array}
\right]+\L\L^\top = MM^\top,
\]
where
$M=[\begin{array}{c|c|c}
(\sqrt{c_1-1})\,\1 & (\sqrt{c_2-1})\,\s & \L
\end{array}]$, with $\s=[\,\1^\to_{c_1}\, |\, -\1^\to_{c_2}\,]^\to$.
This implies that $M$, and hence $[\,\1~|~\s~|~ \L\,]$ has rank at most 2.
Therefore every column of $\L$ is a linear combination of $\1$, and $\s$,
but has at least one zero entry.
So, up to switching, only the columns given in case $(iii)$ are allowed, and
since $G^\sigma$ is connected we may assume $x>0$.
\end{proof}

\section{Format $(i)$}

Assume $G^\sigma$ contains a clique $C$, which gives format $(i)$ from Lemma~\ref{format}.
We claim that $c_1=c_2=1$.
Indeed, if $c_1\geq 2$, or $c_2\geq 2$, then $G^\sigma$ has an induced
subgraph switching isomorphic with the last signed graph in Figure~\ref{forbidden}.
Next consider the matrix $N$ which consists of row $1$, row $3$ and row $c+i$ of $A$ ($1\leq i\leq x$).
Then
\[
N=
\left[
\begin{array}{crcc|c|c}
0 & 1 & 1 & \1^{\to}  & \1^\to_x & \0^\to_z \\[3pt]
1 &  1 & 0 &\1^\to  & \0^\to & \0^\to \\[3pt]
1 & -1 & 0 &\0^\to & \x^\to & \z^\to
\end{array}
\right],\ \mbox{and}
\]
\[
NN^\to-I=
\left[
\begin{array}{ccc}
c-2+x & c-2 & \x^\to\1-1\\
c-2 & c-2 & 0 \\
\x^\to\1-1 & 0 & 1+\x^\to\x+\z^\to\z
\end{array}
\right]
\]
for $(0,\pm 1)$-vectors $\x$ and $\z$.
Note that $\x$ has an entry on the diagonal of $A$, which is equal to $0$.
Therefore $|\x^\to\1| \leq \x^\to\x  \leq x -1$.
We have that $NN^\top-I$ is singular, so
\[
0=\det(NN^\to-I)=(c-2)(x(1+\x^\to\x+\z^\to\z)-(\x^\to\1-1)^2),
\]
which implies that $-\x^\to\1=\x^\to\x=x-1$ and $\z^\to\z=0$.
Finally, $\z=\0$ implies that $z=0$ (otherwise $G^\sigma$ is disconnected),
and thus we have:
{\begin{ccl}
If $G^\sigma$ has adjacency matrix $A$ with Format $(i)$ then $A=A_3$.
\end{ccl}

\section{Further restrictions}

In the remainder we will assume that $G^\sigma$ contains no maximal clique with Format~$(i)$,
and that the matrix $A$ of $G^\sigma$ takes the form of Format~$(ii)$ or $(iii)$ as described in Lemma~\ref{format}.
Concerning the maximal clique $C$ we will assume the following:

{\begin{asm}\label{asm}
~\\
$(i)$ The clique $C$ has maximum order $c$, 
\\
$(ii)$ $C$ has the largest number of outgoing edges among all cliques of order $c$.
\end{asm}

For both formats $(ii)$ and $(iii)$, $C$ is the union of two cliques $C_1$ and $C_2$,
such that the remaining vertices can be partitioned into three sets $X$, $Y$, and $Z$,
where the vertices in $X$ are adjacent to $C_1$ and not to $C_2$,
the vertices in $Y$ are adjacent to $C_2$ and not to $C_1$, 
and the vertices from $Z$ are adjacent to $C_1$ nor  $C_2$.
Note that the sets $X$, $Y$ and $Z$ may be empty,
but $X$ and $Y$ are not both empty (otherwise $G^\sigma$ is disconnected or complete) and we may choose that $X\neq\emptyset$
(otherwise we interchange $C_1$ with $C_2$ and $X$ with $Y$;
in case of format $(iii)$ we also need to switch and take the negative). 
Recall that $c_i=|C_i|~(i=1,2)$, $x=|X|$, $y=|Y|$ and $z=|Z|$.
For a subset $V$ of the vertex set of $G^\sigma$ we let 
$G^{\sigma}_V$ denote the subgraph of $G^\sigma$ induced by $V$.

\begin{prp}\label{restrictions}
~\\
$(i)$ $G^{\sigma}_X$ and $G^{\sigma}_Y$ are disjoint unions of cliques.
\\
$(ii)$ Two adjacent vertices in $G^\sigma_X$ or $G^\sigma_Y$ correspond to identical rows and columns
in $A+I$ or $A-I$.
\\ 
$(iii)$
If $c_{1}\geq 3$ [$c_2\geq 3$], 
then  all edges of $G^{\sigma}_Y$ [$G^{\sigma}_X$] have the same sign which is opposite to the sign of $C_{1}$ [$C_2$].
\\
$(iv)$ 
Every clique in $G^{\sigma}_Y$ [$G^{\sigma}_X$] has order at most $2$,
or one clique has order at least $3$ and all other cliques have order $1$ and all edges have the same sign opposite to the sign of $C_1$ [$C_2$].
\\
$(v)$ There are no negative edges between $X$ and $Y$.
\\
$(vi)$ A vertex from $Z$ cannot be adjacent to both $X$ and $Y$.

\end{prp}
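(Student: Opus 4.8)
\noindent\emph{Plan of proof.}
The engine throughout is that $A^2-I$ is positive semi-definite of rank $2$, so every principal submatrix of $A^2-I$ is psd of rank at most $2$; in particular every $3\times 3$ principal submatrix has non-negative principal minors and vanishing determinant. I combine this with Lemma~\ref{deg1} and with eigenvalue interlacing, which forbids as induced subgraphs the signed graphs of Figure~\ref{forbidden}, the underlying graphs of Figure~\ref{forbidden-all} (for every signing), every $C_n$ with $n\ge5$, and more generally any signed graph whose second largest eigenvalue exceeds $1$ or whose second smallest eigenvalue is below $-1$. The plan is to prove $(ii)$ first, deduce $(i)$ from it, and then treat $(v)$, $(vi)$, and finally $(iii)$, $(iv)$ by exhibiting forbidden configurations.

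For $(ii)$, let $a,b$ be adjacent vertices of $G^\sigma_X$; both are joined positively to every vertex of $C_1$, so for any $u\in C_1$ the columns $\x=Ae_a$ and $\y=Ae_b$ already agree on $C_1$. The assertion to be proved is precisely that $e_a-e_b$ is a $(-1)$-eigenvector of $A$ when the edge $ab$ is positive (and a $(+1)$-eigenvector when it is negative); equivalently, that $a$ and $b$ have identical signed neighbourhoods off $\{a,b\}$, which is the same as saying the corresponding rows and columns of $A+I$ (resp.\ $A-I$) coincide. I would read off the known entries of $A^2-I$ in the principal submatrix indexed by $a$, $b$, $u$ and an arbitrary further vertex $w$, and use the rank-$2$ condition on these $3\times3$ and $4\times4$ submatrices, together with non-negativity of their $2\times2$ minors, to force the two Gram vectors of $a$ and $b$ (in a factorisation $A^2-I=MM^\top$ with $M$ of rank $2$) to coincide. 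The degenerate situations in which this equality fails to propagate to the actual adjacency entries are exactly those excluded by Lemma~\ref{deg1}$(ii)$, which prohibits columns that are equal or nearly equal. I expect this to be the \emph{main obstacle}: one must show not merely that columns $a$ and $b$ of $A^2-I$ agree but that the corresponding columns of $A$ agree, which means resolving the $(+1)$- versus $(-1)$-eigenvector alternative rather than only membership in the combined $\pm1$-eigenspace.

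Claim $(i)$ is then immediate. If $G^\sigma_X$ contained an induced path $a-b-c$, then by $(ii)$ the vertex $a$, being adjacent to $b$, has the same signed neighbours off $\{a,b\}$ as $b$; since $c\notin\{a,b\}$ and $b\sim c$, this forces $a\sim c$, contradicting that the path is induced. Hence $G^\sigma_X$, and symmetrically $G^\sigma_Y$, has no induced $P_3$ and is a disjoint union of cliques.

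For $(v)$ and $(vi)$ the arguments are short and structural. For $(v)$, a negative edge between $p\in X$ and $q\in Y$ yields, together with any $u\in C_1$ and $w\in C_2$, an induced $4$-cycle on $u,p,q,w$ whose edges $up,qw,uw$ are positive while $pq$ is negative; this is an unbalanced $C_4$, with eigenvalues $\sqrt2,\sqrt2,-\sqrt2,-\sqrt2$, so its second largest eigenvalue $\sqrt2>1$ and it is forbidden. For $(vi)$, if $t\in Z$ is adjacent to both $p\in X$ and $q\in Y$, then with $u\in C_1$ and $w\in C_2$ the vertices $u,p,t,q,w$ induce a $C_5$ when $p\not\sim q$ (every signing of $C_5$ is forbidden), and a ``house'' when $p\sim q$, the latter being excluded as a forbidden subgraph or via a negative minor of $A^2-I$ on $\{u,w,t\}$. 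Finally $(iii)$ and $(iv)$ follow by the same method applied to $G^\sigma_Y$: a positive edge of $G^\sigma_Y$, or a second clique of $G^\sigma_Y$ of order at least $2$ accompanying one of order at least $3$, produces together with a triangle in $C_1$ (available since $c_1\ge3$) and the cross-edges to $C_2$ an induced subgraph from Figures~\ref{forbidden} and~\ref{forbidden-all} or a principal submatrix of $A^2-I$ with a negative minor. The laborious part of $(iii)$ and $(iv)$ is the sign bookkeeping, as one must run the argument separately for Format~$(ii)$, where $C_1$ and $C_2$ carry the same sign, and Format~$(iii)$, where they carry opposite signs and $C$ contains cross-edges, keeping track of how switching and negation interact with the normalisation fixed in Lemma~\ref{format}.
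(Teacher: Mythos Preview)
Your treatment of $(v)$ and $(vi)$ matches the paper's almost verbatim, and your sketches for $(iii)$ and $(iv)$ point in the right direction (the paper pins down precisely which forbidden subgraph appears: the second signed graph of Figure~\ref{forbidden}, built from the edge in question, three vertices of the opposite $C_i$, and one vertex of the near $C_i$). The substantive divergence, and the gap, is in how you handle $(i)$ and $(ii)$.

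You try to prove $(ii)$ first by a direct rank-$2$ analysis of $A^2-I$ and then deduce $(i)$. But you correctly flag, and do not resolve, the \emph{main obstacle}: showing that the Gram vectors of $a$ and $b$ agree only tells you that rows $a$ and $b$ of $A^2-I$ coincide, i.e.\ that $e_a-e_b$ lies in $\ker(A-I)\oplus\ker(A+I)$. It does \emph{not} place $e_a-e_b$ in a single eigenspace, and Lemma~\ref{deg1}$(ii)$ is not the right tool here: that lemma compares two columns $\x,\y$ of $A$ satisfying $\x^\top\x=\x^\top\y$, whereas what you would need is a mechanism to rule out a nontrivial splitting $e_a-e_b=v_++v_-$ into $(+1)$- and $(-1)$-eigenvectors. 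Since your proof of $(i)$ depends on $(ii)$, this gap propagates.

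The paper reverses the order and thereby avoids the difficulty entirely. It proves $(i)$ directly: an induced $P_3$ in $G^\sigma_X$, together with one vertex from $C_1$ and one from $C_2$, is the third graph of Figure~\ref{forbidden-all}, forbidden for every signing---a one-line argument. With $(i)$ in hand, two adjacent vertices $a,b\in X$ lie in a clique $C_3\subseteq X$, and $C_1\cup C_3$ is a clique of size at least $3$; extending to a maximal clique and applying Lemma~\ref{format} gives the structure of the corresponding rows of $A$ directly, from which $(ii)$ is immediate. In other words, the rank-$2$ computation you are attempting is already packaged inside Lemma~\ref{format}, and invoking it there yields information about $A$ itself, not merely about $A^2-I$.
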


\begin{proof}
$(i)$ Suppose $G^{\sigma}_X$ contains an induced path
$P_3$ then the subgraph of $G^\sigma$ induced by this path together with one vertex of $C_1$ and one vertex of $C_2$ is a forbidden subgraph (no.~3 in Figure~\ref{forbidden-all}). 
Therefore $G^{\sigma}_X$ contains no $P_3$, 
so $G^{\sigma}_X$ is a clique.
$(ii)$ 
Let $C_3$ be the clique in $G^\sigma_A$ containing the two adjacent vertices.
Then $(ii)$ follows from Lemma~\ref{format} applied to to the maximal clique $C_1\cup C_3$.
$(iii)$ 
Take an edge $e$ in $G^{\sigma}_X$, 
and consider the subgraph $H^\sigma$ of $G^{\sigma}_X$ induced by $e$, 
three vertices of $C_{2}$ and one vertex of $C_1$.
If $e$ has the same sign as the edges of $C_{2}$, 
then $H^\sigma$ is switching isomorphic with the second forbidden subgraph of Figure~\ref{forbidden}. 
This proves $(iii)$.
$(iv)$
Suppose $G^{\sigma}_Y$ contains a clique $C_3$ of order at least 3.
Then $c_1\geq 3$ and all edges of $C_3$ have the same sign.
If $G^{\sigma}_Y$ contains another clique of order at least 2, then $G^\sigma$ contains a signed graph switching isomorphic with the second forbidden signed subgraph of Figure~\ref{forbidden}.
This proves~ $(iv)$.
$(v)$ 
A negative edge between $X$ and $Y$ will create a $4$-cycle with one negative edge
which is forbidden.
$(vi)$
A vertex in $Z$ adjacent to a vertex of $X$ and a vertex from $Y$ creates an induced pentagon, or a pentagon with one chord, which are forbidden for all signings.
\end{proof} 
Note that Assumption~\ref{asm} implies that a clique in $G^{\sigma}_X$ [or $ G^{\sigma}_Y$] has order at most $c_{2}$ [$c_1$], and for a clique $C_3$ in $G^\sigma_X$ [$G^\sigma_Y$] of order $c_2$ [$c_1$] there are at most $c_2 y$ [$c_1 x$] edges between $C_3$ and $Y\cup Z$ [$X\cup Z$].

\section{Format~$(ii)$ with $Y=\emptyset$}

Suppose $G^\sigma$ has format $(ii)$ and $Y=\emptyset$.
Let $N$ be the matrix consisting of three rows of $A$
corresponding to a vertex from $C_1$, a vertex from $C_2$
and a vertex $v$ from $X$.
Then
\[
N=
\left[
\begin{array}{cl|cl|c|c}
0 & \1^\to_{c_1-1} & 1 & \1^\to_{c_2-1} & \1^\to_{x} & \0^\to_z \\[3pt]
1 & \1^\to         & 0 & \1^\to         & \0^\to     &  \0^\to  \\[3pt]
1 & \1^\to         & 0 & \0^\to         & \x^\to_1   &  \z^\to_1
\end{array}
\right]
\]
(for $(0,\pm 1)$ vectors $\x_1$ and $\z_1$), and
\[
M=NN^\top-I=
\left[
\begin{array}{ccc}
c+x-2    & c-2 & c_1+x_1' -1 \\[3pt]
c-2      & c-2 & c_1       \\[3pt]
c_1+x_1'-1 & c_1 & c_1+x_1+z_1-1
\end{array}
\right],
\]
where $x_1'=\1^\to\x$, $x_1=\x^\to_1\x_1$ and $z_1=\z_1^\to\z_1$.
Note that $|x_1'|\leq x_1 \leq x-1$, $x_1+1\leq c_2$ and $x_1+1=c_2$ implies that $z_1=0$
by Assumption~\ref{asm}.
Using Gaussian elimination we get 
$\det(M)= x(c-2)(c_1+x_1+z_1-1)-(c-2)(x_1'-1)^2-xc_1^2$.
For convenience we define $f(a,b)=ab/(a+b)$.
We have $\det(M)=0$, which gives:
\begin{equation}\label{det1}
x_1+z_1+f(c_1,c_2-2) = \frac{(x_1'-1)^2}{x}+1.
\end{equation}
If $c_2=1$,
Proposition~\ref{restrictions}($i-iii$) implies that $X$ is a coclique and that there are no edges between $X$ and $Z$.
Then $A$ has $x+1$ equal rows, which contradicts Lemma~\ref{deg1}$(ii)$.
Therefore $c_2\geq 2$ and $f(c_1,c_2-2)\geq 0$.
From Assumption~\ref{asm} it follows that $x_1=x_1'=1$, or $x_1=-x_1'$.  
We will consider the different possibilities for $x_1$.

\begin{prp}\label{0}
Suppose $x_1=0$. 
Then $v$ is the only isolated vertex of $G^\sigma_X$ and one of the following holds: 
\\
$(i)$ $x=1$, $c_2=2$ and $A=A_4$ with $m=1$ and $\ell\geq 1$. 
\\
$(ii)$ $A$ is an unsigned matrix given in case $(v)$ of Theorem~1 from \cite{CHVW}.
\\
$(iii)$ $(c_1,c_2,x)\in\{(2,5,5),(2,6,3),(3,4,5),(4,4,3)\}$.
\end{prp}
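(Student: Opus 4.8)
The plan is to push the standing identity $\det(M)=0$ to its arithmetic limit under the extra hypothesis $x_1=0$, solve the resulting Diophantine equation, and then recognise the finitely many surviving parameter tuples. First I would note that $x_1=\x^\to_1\x_1=0$ forces $\x_1=\0$, since its entries lie in $\{0,\pm1\}$; hence $x_1'=\1^\to\x_1=0$ and $v$ is isolated in $G^\sigma_X$. Substituting $x_1=x_1'=0$ into \eqref{det1} collapses it to
\[
z_1+f(c_1,c_2-2)=1+\tfrac1x .
\]
The decisive observation is that $f$ is a harmonic mean, so for $c_2\ge3$ one has $1/f(c_1,c_2-2)=1/c_1+1/(c_2-2)$, while $c_2=2$ gives $f=0$.

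Next I would solve this equation over the nonnegative integers, using $z_1\ge0$, $f\ge0$ and $x\ge1$. The right-hand side is at most $2$, which forces $z_1\le2$; the value $z_1=2$ occurs only in the degenerate branch $c_2=2$, where $f=0$ and hence $x=1$. For $c_2\ge3$ and $z_1=0$ the equation becomes the Egyptian-fraction identity
\[
\frac1{c_1}+\frac1{c_2-2}+\frac1{x+1}=1 ,
\]
whose positive-integer solutions are exactly those with $\{c_1,c_2-2,x+1\}$ a permutation of $\{3,3,3\}$, $\{2,4,4\}$ or $\{2,3,6\}$; the branch $z_1=1$ reduces to $1/c_1+1/(c_2-2)=x$ and yields only $(c_1,c_2,x)=(2,4,1)$ and $(1,3,2)$. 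This leaves an explicit short list of candidate tuples.

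The step I expect to be the real obstacle is proving that $v$ is the \emph{only} isolated vertex of $G^\sigma_X$. Two isolated vertices $v,v'$ necessarily agree, as rows of $A$, on $C_1$ (all $+1$), on $C_2$ and on $X$ (all $0$), and by the displayed equation they have the same number $z_1$ of neighbours in $Z$; they can differ only in the sign pattern on those neighbours. To force a contradiction I would use that $A^2-I$ is psd of rank $2$: the principal $3\times3$ submatrix on $\{v,v',p\}$ with $p\in C_2$ must be singular, and its determinant factors in a way that pins down the overlap of the two $Z$-neighbourhoods; combined with Lemma~\ref{deg1}$(ii)$ (or the $2\times2$ psd minors) this rules out a second isolated vertex. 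The payoff is immediate: whenever $x=2$ the lone remaining vertex of $X$ is automatically isolated too, so uniqueness eliminates every candidate with $x=2$, namely $(3,5,2)$, $(2,8,2)$, $(6,4,2)$ and $(1,3,2)$.

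Finally I would sort the survivors. The branch $c_2=2$, $x=1$, $z_1=2$ rebuilds $A_4$ (with $m=1$, after interchanging its two parameters and taking the negative), giving $(i)$; the tuples $(3,8,1)$, $(4,6,1)$, $(6,5,1)$ are genuinely unsigned — they are $A_5$ with $k=1$ — and reproduce case $(v)$ of \cite{CHVW}, giving $(ii)$; and $(2,5,5)$, $(2,6,3)$, $(3,4,5)$, $(4,4,3)$ remain as $(iii)$. The one straggler $(2,4,1)$ must be excluded separately by showing it does not extend to a signed graph in $\SG$: connectivity together with Lemma~\ref{deg1}$(i)$ forces a $Z$-structure that creates one of the forbidden induced subgraphs. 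Throughout, the guiding principle is that \eqref{det1} is only the local (single $3\times3$ minor) constraint, whereas the global rank-two condition on $A^2-I$ is what ultimately decides which candidates are realisable.
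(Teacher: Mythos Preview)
Your approach is essentially the paper's: substitute $x_1=x_1'=0$ into \eqref{det1}, bound $z_1$, and enumerate the finitely many integer solutions branch by branch. Your Egyptian-fraction rewriting $1/c_1+1/(c_2-2)+1/(x+1)=1$ for the $z_1=0$ case is a clean way to do the enumeration; in fact it is slightly more thorough than the paper, which lists only eight triples and omits $(c_1,c_2,x)=(2,8,2)$ and $(6,4,2)$ (both genuine solutions of the equation, both killed by the same $x=2$ argument you use for $(3,5,2)$).

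Where you diverge from the paper is in the uniqueness step, and there you are making life much harder than necessary. You worry that two isolated vertices $v,v'$ could differ in their $Z$-neighbourhoods and propose analysing a $3\times 3$ principal minor of $A^2-I$. But notice that every tuple you actually need to eliminate by uniqueness has $z_1=0$: the $z_1=2$ branch already has $x=1$, and the $z_1=1$ branch is disposed of separately by showing no completion exists. With $z_1=0$ an isolated vertex of $G^\sigma_X$ has no neighbours in $Z$ at all, so two such vertices give \emph{identical} rows of $A$, and Lemma~\ref{deg1}$(ii)$ applies directly. That one-line observation replaces your entire ``real obstacle'' paragraph.
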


\begin{proof}
Equation \ref{det1} gives $z_1=1+1/x-f(c_1,c_2-2)$, so $z_1\leq 2$.
If $z_1=2$ then $x=1$ and $c_2=2$, 
which straightforwardly leads to matrix $A_4$ with $m=1$. 
If $z_1=1$ then $f(c_1,c_2-2)=1/x$, which implies $(c_1,c_2,x)=(2,4,1)$, or $(1,3,2)$.
Both of these cases cannot be completed to a signed graph in $\SG$.
If $z_1=0$ then $f(c_1,c_2-2)=1+1/x$, 
which implies that $(c_1,c_2,x)$ is equal to $(4,2,1)$, $(3,8,1)$, $(6,5,1)$, 
$(3,5,2)$, $(4,4,3)$, $(2,6,3)$, $(2,5,5)$, or $(3,4,5)$.
The first three triples lead to case $(v)$ of Theorem~1 from~\cite{CHVW}.
If $G^\sigma_X$ has a second isolated vertex then the two corresponding rows of $A$ are equal which contradicts Lemma~\ref{deg1}. 
Therefore $G^\sigma_X$ has just one isolated vertex and the triple $(3,5,2)$ is not possible.
\end{proof}

\begin{prp}
If $x_1 = x-1 \geq 2$ then 
$A=A_5$ with $k\geq 3$.
\end{prp}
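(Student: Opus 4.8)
The plan is to extract maximal rigidity from the hypothesis $x_1=x-1\ge 2$ and then to read off the admissible parameters from \eqref{det1}.

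First I would pin down $G^\sigma_X$. Since $x_1\ge 2$, the alternative $x_1=x_1'=1$ permitted by Assumption~\ref{asm} is impossible, so $x_1=-x_1'$. As $G^\sigma_X$ is a disjoint union of cliques (Proposition~\ref{restrictions}$(i)$) and $v$ is adjacent to $x_1=x-1$ of the other vertices, $X$ is a single clique of order $x$, and $x_1=-x_1'$ makes all its edges negative. Maximality of $C$ forces $x\le c_2$ (otherwise $C_1\cup X$ would be a larger clique), and $x\ge 3$, so $c_2\ge 3$; then Proposition~\ref{restrictions}$(iii)$ reconfirms that $G^\sigma_X$ is a negative clique. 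By Proposition~\ref{restrictions}$(ii)$ the vertices of $X$ have identical neighbourhoods outside $X$, so they all attach to the same $z_1$ vertices of $Z$ with the same signs.

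Next I would substitute $x_1=x-1$ and $x_1'=-(x-1)$ into \eqref{det1}. Then $(x_1'-1)^2/x=x$, and the equation collapses to
\[
z_1+f(c_1,c_2-2)=2 ,
\]
which no longer involves $x$. Since $c_2\ge 3$ we have $f(c_1,c_2-2)>0$, hence $z_1\in\{0,1\}$, and I would treat the two values separately. For $z_1=0$ I solve $f(c_1,c_2-2)=2$, i.e.\ $c_1=2+4/(c_2-4)$; the positive integral solutions are exactly $(c_1,c_2)\in\{(6,5),(4,6),(3,8)\}$, the three pairs occurring in $A_5$. Because no vertex of $X$ meets $Z$ and $C$ is never adjacent to $Z$, connectedness forces $Z=\emptyset$. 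All signs are now fixed ($C_1,C_2$ and the edges between them positive, $C_1$–$X$ positive, $C_2$–$X$ absent, $X$ a negative clique), so $A=A_5$ with $(m,\ell,k)=(c_1,c_2,x)$ and $k=x\ge 3$, as claimed.

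The case $z_1=1$ is the one genuine obstacle. Here $f(c_1,c_2-2)=1$ forces $(c_1,c_2)=(2,4)$, whence $x=3$, and all three vertices of $X$ attach, with a common sign, to a single vertex $w\in Z$. I would eliminate this by interlacing applied to the induced subgraph on $C\cup X\cup\{w\}$: the block partition $\{C_1,C_2,X,\{w\}\}$ is equitable, its interior eigenvalues are all $\pm1$, and (independently of the sign of the $X$–$w$ edges) its quotient matrix has characteristic polynomial $\lambda^4-2\lambda^3-22\lambda^2+20\lambda+15$. This polynomial vanishes at neither $1$ nor $-1$ and has two roots exceeding $1$, so this induced subgraph has second largest eigenvalue greater than $1$; by interlacing $G^\sigma\notin\SG$, a contradiction. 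Isolating and discarding this spurious solution is the crux of the argument, and with it the identification $A=A_5$ is complete.
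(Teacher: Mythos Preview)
Your argument follows the paper's line almost exactly: reduce \eqref{det1} to $z_1+f(c_1,c_2-2)=2$, solve $z_1=0$ to obtain the three $A_5$ pairs, and eliminate $z_1=1$. The paper dismisses $z_1=1$ with the phrase ``has no solution by straightforward verification''; your explicit interlacing computation on the induced subgraph $C\cup X\cup\{w\}$ is a welcome concretisation of that step, and the characteristic polynomial you quote is correct.

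One small gap: from $(c_1,c_2)=(2,4)$ you write ``whence $x=3$'', but you have only established $3\le x\le c_2=4$. To exclude $x=4$ you need the second clause of Assumption~\ref{asm}: if $x=c_2$ then $C_1\cup X$ is another clique of maximum order $c$, and with $z_1=1$ it has $c_1c_2+xz_1=12$ outgoing edges versus $c_1x=8$ for $C$, contradicting the choice of $C$. (This is exactly the observation recorded before \eqref{det1} as ``$x_1+1=c_2$ implies $z_1=0$''.) Once this is inserted, your proof is complete and equivalent to the paper's.
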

\begin{proof}
The underlying graph $G_X$ of $G^\sigma_X$
is complete, and from Proposition~\ref{restrictions}($iii$) it follows that all signs are negative.
Formula~\ref{det1} becomes: $z_1 = 2 - f(c_1,c_2-2)$.
If $z_1=2$, then $c_2=2$ and $x_1\leq1$,
which is excluded.
If $z_1=1$ then $f(c_1,c_2-2)=1$, so $c_1=2$, $c_2=4$ and $x=2$ or $3$ which has no solution by straightforward verification.
If $z_1=0$, then $f(c_1,c_2-2)=2$, which gives $(c_1,c_2)=(3,8)$, $(4,6)$ or $6,5)$. 
Thus we find $A_5$ from Theorem~\ref{main} (actually, only the ones with $k \leq \ell$,
because of Assumption~\ref{asm}).
\end{proof}

\begin{prp}
If $x_1=1$, then one of the following holds:  
\\ 
($i$) $A=A_5$ with $k=2$.
\\
($ii$) $A=-A_6$ with $m=1$ (switched around the first vertex).
\\
($iii$) $A=A_7$.
\\
($iv$) $A$ is an unsigned matrix given in case $(ii)$ of Theorem~1 from \cite{CHVW}.
\end{prp}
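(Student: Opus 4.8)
The plan is to reuse the template of the two preceding propositions: substitute $x_1=1$ into the singularity relation~(\ref{det1}) and read off the feasible parameters, then reconstruct $A$ in each surviving case. With $x_1=1$, equation~(\ref{det1}) becomes
\[
z_1+f(c_1,c_2-2)=\frac{(x_1'-1)^2}{x}.
\]
By the dichotomy recorded just before Proposition~\ref{0} (coming from Assumption~\ref{asm} together with Proposition~\ref{restrictions}), the vertex $v$ lies in a clique of $G^\sigma_X$ of order $2$, and either $x_1'=1$ or $x_1'=-1$; moreover a positive edge ($x_1'=1$) forces $c_2=2$ by Proposition~\ref{restrictions}$(iii)$. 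I would organise the proof around these two subcases.

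In the subcase $x_1'=1$ the right-hand side above vanishes, so $z_1=0$ and $f(c_1,c_2-2)=0$, whence $c_2=2$. Thus $v$ has a single positive neighbour in $X$ and no neighbour in $Z$, and the twin relation of Proposition~\ref{restrictions}$(ii)$ pins down how this $K_2$ attaches to $C$. I expect that after switching the whole configuration becomes unsigned, landing in case $(iv)$, that is, the unsigned graphs listed under case $(ii)$ of Theorem~1 of~\cite{CHVW}.

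In the subcase $x_1'=-1$ the relation becomes $z_1+f(c_1,c_2-2)=4/x$. Since $z_1$ is a nonnegative integer and $f(c_1,c_2-2)\geq\tfrac12$ as soon as $c_2\geq3$ (while $f=0$ forces $c_2=2$, hence $z_1=4/x\geq1$), the value $4/x$ is bounded below and so $x\leq8$. This leaves only finitely many admissible $(c_1,c_2,x,z_1)$, which I would list by hand. For each I would rebuild $A$ from the neighbourhood of $v$, using Proposition~\ref{restrictions} and Assumption~\ref{asm} to fix $G^\sigma_X$, the edges to $Z$ and all the signs, and Lemma~\ref{deg1} to discard configurations producing repeated rows. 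I expect the survivors to be exactly $A_5$ with $k=2$ (when $z_1=0$, $x=2$ and $(c_1,c_2)\in\{(3,8),(4,6),(6,5)\}$, so that $f=2=4/x$), the matrix $A_7$, and $-A_6$ with $m=1$ after switching around the first vertex.

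The parameter enumeration is routine. The real obstacle is the reconstruction step: the singular $3\times3$ block only controls the neighbourhood of the single vertex $v$, so recovering the global signing — in particular the adjacencies and signs involving $Z$, and the internal structure of the several cliques — requires propagating the twin and forbidden-subgraph constraints across the whole graph. The most delicate point is recognising that the reconstructed matrices are switching isomorphic, possibly after negation, to the canonical forms $A_6$ and $A_7$ rather than to genuinely new graphs, since those are displayed in a switched and negated normal form that does not literally coincide with the Format~$(ii)$ layout.
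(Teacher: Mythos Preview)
Your overall route is the paper's route: split on the sign of $x_1'$, feed $x_1=1$ into~(\ref{det1}), and enumerate. But there is one structural idea you have not made explicit, and without it neither subcase actually closes.

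The relation~(\ref{det1}) is derived from a \emph{single} vertex $v\in X$; the triple $(c_1,c_2,x)$ is global, but $x_1,x_1',z_1$ vary with $v$. The paper's proof works by forcing consistency across all vertices of $X$. In the subcase $x_1'=-1$ one first shows $c_2\geq 3$: if $c_2=2$ then $x_1+1=c_2$, and Assumption~\ref{asm} gives $z_1=0$, contradicting $z_1=4/x>0$. Your parenthetical ``$f=0$ forces $c_2=2$, hence $z_1=4/x\geq 1$'' lets this case survive into the finite enumeration, where it would cost you spurious triples. More importantly, once you have the eleven feasible triples with $z_1=0$, four of them have $x$ odd. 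Since $G^\sigma_X$ is then a perfect matching (negative edges, by Proposition~\ref{restrictions}$(iii)$) plus one isolated vertex, that isolated vertex must satisfy the $x_1=0$ analysis of Proposition~\ref{0} for the \emph{same} $(c_1,c_2,x)$. Only $(2,6,3)$ and $(4,4,3)$ appear in both lists, and those two are killed by a short direct check. This cross-reference to Proposition~\ref{0} is the mechanism that prunes the list down to the seven even-$x$ triples that yield $A_5$, $-A_6$ and $A_7$; your ``rebuild $A$ and use Lemma~\ref{deg1}'' plan does not obviously supply it.

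The $x_1'=1$ subcase is handled by the same consistency principle, not by a switching argument. You correctly get $c_2=2$, but the conclusion that $G^\sigma$ is (switching isomorphic to) an unsigned graph comes from observing that $c_2=2$ is incompatible with the $c_2\geq 3$ forced in the other subcase: hence \emph{every} vertex of $X$ has $x_1=x_1'=1$ and $z_1=0$, so $G^\sigma_X$ is a disjoint union of positive edges with no neighbours in $Z$, which is exactly case~$(ii)$ of Theorem~1 in~\cite{CHVW}. Your ``I expect that after switching the whole configuration becomes unsigned'' is the right target but needs this global-consistency step to become an argument.
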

\begin{proof}
If $x_1=-x_1'=1$, then clearly $x\geq 2$ and Formula~\ref{det1} becomes 
$z_1=4/x - f(c_1,c_2-2))$.
It also follows that $c_2\geq 3$, since
$c_2=2$ would imply $z_1=0$ (by Assumption~\ref{asm}). 
From $x\geq 2$ and $f(c_1,c_2-2)\geq 1$ it follows that $z_1\leq 1$, and that $z_1=1$ implies $(c_1,c_2,x)=(2,4,2)$,
which is not possible. 
So $z_1=0$ and we obtain the 
following solutions for Formula~\ref{det1}: 
$(c_1,c_2,x)=(1,3,8)$, $(1,4,6)$, $(1,6,5)$, $(2,3,6)$, $(2,4,4)$,
$(2,6,3)$, $(3,8,2)$, $(4,3,5)$, $(4,4,3)$, $(4,6,2)$, or $(6,5,2)$.
By Proposition~\ref{restrictions}($iv$),
$G^\sigma_X$ has an isolated vertex if $x$ is odd, in which case we need a solution with $x_1=0$ for the same $(c_1,c_2,x)$.
Proposition~\ref{0} gives just two possibilities: $(2,6,3)$ and $(4,4,3)$.
In both cases $G^\sigma_X$ consists of one negative edge and an isolated vertex, and it is easily checked that this is not possible.
Thus $x$ is even, and of the remaining seven triples the first two lead to $-A_6$ with $m=1$, the next two to $A_7$, and the last three to $A_5$ with $k=2$.
If $x_1=x_1'=1$, then we easily have $c_2=2$ and $z_1=0$.
None of the solutions above has $c_2=2$ and $x\geq 2$, 
therefore all rows corresponding to $X$ have $x_1=x_1'=1$, $c_2=2$ and $z_1=0$,
which implies that $G^\sigma$ is case~$(ii)$ 
of Theorem~1 from~\cite{CHVW}.
\end{proof}

\begin{prp}
If $2\leq x_1 \leq x-2$, then there is no solution.
\end{prp}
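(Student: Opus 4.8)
The plan is to use the hypothesis $x_1\ge 2$ to fix the shape of $G^\sigma_X$, and then to exploit the slack $x_1\le x-2$ to exhibit an isolated vertex whose instance of \eqref{det1} collapses the problem to a short finite list that can be checked by hand.

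First I would pin down $G^{\sigma}_X$. Since $x_1\ge 2$, the vertex $v$ lies in a clique of $G^{\sigma}_X$ of order $x_1+1\ge 3$, so by Proposition~\ref{restrictions}$(iv)$ this is the unique clique of order at least $2$: thus $G^{\sigma}_X$ consists of one clique $C_3$ of order $q:=x_1+1$, all of whose edges are negative (opposite to the sign of $C_2$, which I take positive), together with isolated vertices; in particular $x_1'=-x_1$. The remark following Proposition~\ref{restrictions} gives $q\le c_2$, hence $c_2\ge 3$ and $f:=f(c_1,c_2-2)\ge\frac12$. By Proposition~\ref{restrictions}$(ii)$ the rows of $A$ indexed by $C_3$ agree off $C_3$, so all vertices of $C_3$ have the same number $z_1^{C_3}$ of neighbours in $Z$. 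Crucially, $x_1\le x-2$ forces $x\ge q+1$, so $G^{\sigma}_X$ has at least one isolated vertex $w$.

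Then I would write down \eqref{det1} for $w$ and for a vertex of $C_3$ and combine them. For $w$ we have $x_1=x_1'=0$, so \eqref{det1} reads $z_1^{w}+f=1+\frac1x$; since $f\ge\frac12$ and $0<\frac1x\le\frac14$, the only nonnegative-integer solution is $z_1^{w}=0$ and $f=1+\frac1x$. Substituting $x_1=q-1$, $x_1'=-(q-1)$ and this $f$ into \eqref{det1} for a vertex of $C_3$ gives $z_1^{C_3}=(q-1)(q+1-x)/x\le 0$, whence $z_1^{C_3}=0$ and $x=q+1$. Consequently no vertex of $X$ meets $Z$, and as $C$ and $Y=\emptyset$ also miss $Z$, connectivity forces $Z=\emptyset$. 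So $G^\sigma$ is built from $C_1,C_2,C_3$ and the single isolated vertex $w$, subject to $f(c_1,c_2-2)=\frac{q+2}{q+1}$ with $x=q+1$.

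Finally I would solve this and dispose of the survivors. Writing the condition as $\frac1{c_1}+\frac1{c_2-2}=\frac{q+1}{q+2}$, the case $c_1=1$ gives a negative summand, so $c_1\ge2$; then $\frac1{c_2-2}\ge\frac12-\frac1{q+2}$ yields $c_2-2\le 2+\frac4q$, which with $c_2-2\ge q-2$ forces $q^2-4q-4\le0$, i.e.\ $q\in\{3,4\}$. Scanning these leaves exactly $(c_1,c_2,q)=(3,4,4)$ and $(2,5,4)$, both with $x=5$. For each the matrix is now completely determined: $C_1\cup C_2$ is the positive clique $J-I_c$, $C_3$ is a negative clique joined positively to $C_1$, and $w$ is joined positively to $C_1$ only. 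The partition into $C_1,C_2,C_3,\{w\}$ is equitable, and on each diagonal block the zero-sum eigenvalues are $\pm1$ while the off-diagonal blocks are constant; hence membership in $\SG$ would require the $4\times4$ quotient matrix $Q$ to have at most two eigenvalues different from $\pm1$. A direct evaluation of $\det(Q-\lambda I)$ shows that in both cases neither $\lambda=1$ nor $\lambda=-1$ is a root, so all four quotient eigenvalues differ from $\pm1$, contradicting $G^\sigma\in\SG$. This eliminates both candidates and proves the proposition. The conceptual point is the use of $x_1\le x-2$ to guarantee the isolated vertex $w$, after which the second copy of \eqref{det1} does all the work; the only delicate step is the final pair of $4\times4$ characteristic-polynomial evaluations, where the value at $\lambda=\pm1$ must be checked to be nonzero.
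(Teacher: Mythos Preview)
Your argument follows the same skeleton as the paper's: invoke Proposition~\ref{restrictions}$(iv)$ to see that $G^\sigma_X$ is one negative clique $C_3$ of order $q=x_1+1\ge 3$ together with isolated vertices, then feed an isolated vertex into Formula~\eqref{det1} to pin down $(c_1,c_2,x)$. The paper does exactly this, but appeals to Proposition~\ref{0} for the list of admissible triples rather than re-deriving $f(c_1,c_2-2)=1+1/x$ from scratch.

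Where you genuinely improve on the paper is at the endgame. The paper asserts that ``there is just one candidate: $(c_1,c_2,x)=(3,4,5)$ and $x_1=-x_1'=2$,'' and that this fails Formula~\eqref{det1}. But since Proposition~\ref{0} forces exactly one isolated vertex, one has $x_1=x-2$, and for $x=5$ that means $x_1=3$, not $2$; moreover both $(c_1,c_2,x)=(3,4,5)$ and $(2,5,5)$ survive, and with $x_1=3$ Formula~\eqref{det1} \emph{is} satisfied (with $z_1=0$, since $5z_1=x_1(x_1-3)=0$). So Formula~\eqref{det1} alone does not finish the job, contrary to what the paper's wording suggests. Your extra step---forming the equitable partition $\{C_1,C_2,C_3,\{w\}\}$ and checking that for both surviving candidates the $4\times 4$ quotient matrix has $\det(Q\pm I)\neq 0$, hence four eigenvalues off $\pm 1$---is precisely what is needed to close the gap. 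In short: same route, but your proof actually reaches the destination.
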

\begin{proof}
Proposition~\ref{restrictions}$(iv)$ implies that if there is a solution with $2\leq x_1 \leq x-2$, then there is also one with $x_1=0$ for the same $c_1$, $c_2$ and $x$.
From Proposition~\ref{0} we find that there is just one candidate:
$(c_1,c_2,x)=(3,4,5)$ and $x_1=-x_1'=2$.
But this does not satisfy Formula~\ref{det1}.
\end{proof}
Finally we revisit Proposition~\ref{0}($iii$).
For each of the four cases there must be another solution with $x_1>0$, which we haven't found. So none of these four triples are possible, and the determination all graphs with Format ($ii$) and $Y=\emptyset$ is complete.

{\begin{ccl}
If $A$ has Format $(ii)$ with $Y=\emptyset$ then $A$ is unsigned (case~$(ii)$, or $(v)$   from Theorem~1 of \cite{CHVW}), or $A$ is one of the following:
$(i)$ $A_4$ with $m=1,\ell\geq 1$, 
$(ii)$ $A_5$ with $2\leq k\leq \ell$,
$(iii)$ $-A_6$ with $m=1$ (switched around the first vertex), 
$(iv)$ $A_7$.
\end{ccl}

\section{Format~$(iii)$ with $Y=\emptyset$}

Suppose $G^\sigma$ has format $(iii)$ and $Y=\emptyset$.
Again we let $N$ be the matrix consisting of row~1, row~$c_1+1$ and row $c+1$ of $A$
(so row~$c+1$ corresponds to a vertex in $X$).
Then
\[
M=NN^\top-I=
\left[
\begin{array}{ccc}
c_1+c_2+x-2 & c_1-c_2 & c_1+x_1'-1     \\[3pt]
c_1-c_2     & c_1+c_2 & c_1          \\[3pt]
c_1+x_1'-1    & c_1     & c_1+x_1+z_1-1
\end{array}
\right],
\]
with $x_1'$, $x_1$ and $z_1$ as before
(so $|x_1'|\leq x_1 \leq x-1$).
The expression for $\det(M)$ is a bit messy now, so to deal with $\det(M)=0$ in another way.
We write $M=(c_1-2)J+(c_2-2)K+R$, where 
 \[
 K=\left[\begin{array}{ccc} 1 & -1 & 0 \\ -1 & 1 & 0 \\ 0 & 0 & 0 \end{array}\right],
 \mbox{ and }
 R=\left[\begin{array}{ccc} x+2 & 0 & x_1'+1 \\ 0 & 2 & 2 \\ x_1'+1 & 2 & x_1+z_1+1 \end{array}\right].
 \]
We easily have that $(c_1-2)J$ and $(c_2-2)K$ are positive semi-definite (psd) matrices.
Thus if $R$ is positive definite (pd), then $M$ is pd and therefore nonsingular.
Moreover, if $R$ is psd, then $M$ is psd and the kernel of $M$ is the intersection of the kernels of $(c_1-2)J$, $(c_2-2)K$ and $R$.
So if this intersection is $\{\0\}$ then $M$ is also nonsingular.
The diagonal entries of $R$ are positive, and so are the $2\times 2$ principal minors.
This implies that $R$ is pd whenever $\det(R)>0$ and $R$ is psd when $\det(R)=0$.
Using this, and 
$\det(R) = 2((x+2)(x_1+z_1-1)-(x_1'+1)^2)$ we get the following restrictions.

\begin{prp}\label{iii-0-p}
One of the following holds:
\\
($i$) $x_1=x_1'$, $z_1=0$, 
\\
($ii$) $x_1=x_1'=0$, $z_1 =1$,
\\
($iii$) $x_1=-x_1'=1$, $z_1=0$, $c_2=2$.
\end{prp}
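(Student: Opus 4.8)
The engine of the proof is that $M=NN^\top-I$ is a principal submatrix of $A^2-I$. Since $G^\sigma\in\SG$, the matrix $A^2$ has all but two eigenvalues equal to $1$, so $A^2-I$ is psd of rank $2$; consequently $M$ is psd and has rank at most $2$, hence $M$ is psd \emph{and} singular. The whole task is to decide, for integer parameters obeying $|x_1'|\le x_1\le x-1$, $c_1,c_2\ge2$ and Assumption~\ref{asm}, when $M=(c_1-2)J+(c_2-2)K+R$ can be simultaneously psd and singular, and to read off the resulting patterns for $(x_1,x_1',z_1)$.

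The first reduction is already prepared: because $(c_1-2)J$ and $(c_2-2)K$ are psd, $R$ cannot be pd (else $M$ would be pd, contradicting singularity), and since the diagonal entries and the $2\times2$ principal minors of $R$ are positive this forces $\det R\le0$, i.e. $(x+2)(x_1+z_1-1)\le(x_1'+1)^2$. I would then split on whether $\det R=0$ or $\det R<0$.

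When $\det R=0$ the matrix $R$ is psd, so $M$ is a sum of three psd matrices and $\ker M=\ker\!\big((c_1-2)J\big)\cap\ker\!\big((c_2-2)K\big)\cap\ker R$; singularity requires a nonzero vector in this intersection. Here I would subdivide according to whether $c_1$ and $c_2$ equal $2$ or exceed $2$, since that governs which of $J,K$ drops out. If $c_1,c_2>2$ the first two kernels meet only in $\mathrm{span}\{(1,1,-2)^\to\}$, and a one-line check shows $R(1,1,-2)^\to$ has nonzero middle coordinate, so no solution arises; the surviving subcases ($c_1=2$, $c_2=2$, or both) combine $\det R=0$ with $|x_1'|\le x_1$ to pin $(x_1,x_1',z_1)$ down to the patterns in $(i)$ and $(iii)$, the constraint $c_2=2$ in $(iii)$ emerging precisely from the boundary case where $K$ disappears.

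The case $\det R<0$ is the main obstacle, because then $R$ has inertia $(2,0,1)$, the kernel of $M$ is no longer an intersection of kernels, and one must reason with the inertia of $M$ directly: how the two rank-deficient psd summands $(c_1-2)J$ and $(c_2-2)K$ can compensate the single negative direction of $R$ while still leaving $M$ singular. I would control this by restricting the quadratic form of $M$ to the common kernel of $J$ and $K$ (forcing nonnegativity of $R$ there) and by testing the $2\times2$ principal minors of $M$, again organizing the work by whether $c_1,c_2$ equal or exceed $2$; together with the bound $|x_1'|\le x_1\le x-1$ this confines the admissible patterns to $(i)$ and $(ii)$. In every branch Assumption~\ref{asm} and Lemma~\ref{deg1} are used to discard parameter values that cannot be realized by an actual signed graph, leaving exactly the three patterns claimed.
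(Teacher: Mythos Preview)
Your decomposition $M=(c_1-2)J+(c_2-2)K+R$ and the observation that $R$ cannot be pd are exactly how the paper begins, but the paper's next move is different and much more efficient: it invokes Proposition~\ref{restrictions}($iii$,$iv$) at the outset to obtain the dichotomy $x_1=x_1'$ or $x_1=-x_1'=1$ (this is a structural fact about the signs in $G^\sigma_X$, not something extracted from $M$). You never mention Proposition~\ref{restrictions}, and this omission is what forces you into the awkward split on the sign of $\det R$.

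Your $\det R<0$ branch is where the gap lies. The inertia argument you sketch (restricting to $\ker J\cap\ker K$, checking $2\times2$ minors of $M$) is vague, and it is also the wrong tool: those constraints involve $c_1,c_2$ and do not cleanly isolate $(x_1,x_1',z_1)$. What actually works is elementary and does not use psd-ness of $M$ at all: from $(x+2)(x_1+z_1-1)<(x_1'+1)^2$ together with $|x_1'|\le x_1\le x-1$ one gets $(x_1+3)(x_1+z_1-1)<(x_1+1)^2$, which forces $z_1\le1$; if $z_1=1$ then $x_1=0$, and if $z_1=0$ with $x_1\ge1$ then $(x_1'+1)^2>(x_1+3)(x_1-1)$ forces $x_1'=x_1$. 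This direct computation delivers exactly $(i)$ and $(ii)$, but you do not carry it out, and the route you propose would not obviously get there.

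The paper, having the dichotomy in hand, simply bounds $\det R$ in each of the two branches (obtaining $\det R\ge 2(x_1z_1+3z_1-4)$ when $x_1=x_1'$, and $\det R=2(x+2)z_1$ when $x_1=-x_1'=1$), then treats two residual boundary cases by kernel analysis. The single use of Assumption~\ref{asm} is to kill the specific leftover $(x_1,z_1,c_1,c_2,x)=(1,1,2,2,2)$; Lemma~\ref{deg1} is not used at all here. Your blanket appeal to Assumption~\ref{asm} and Lemma~\ref{deg1} ``in every branch'' overstates their role.
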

\begin{proof}
From Proposition~\ref{restrictions}($iii,iv$) it follows that $x_1=-x_1'=1$, or $x_1=x_1'$.
First assume $x_1=x_1'$.
Using $x\geq x_1+1$ we get 
$\det(R)\geq 2(x_1 z_1+3z_1-4)$.
This implies $z_1=0$, $(x_1,z_1)=(0,1)$, or $(1,1)$.
The first option is case $(i)$, and the second one is case $(ii)$. 
If $x_1=z_1=1$ we find 
\[ 
R=\left[\begin{array}{ccc} x+2 & 0 & 2 \\ 0 & 2 & 2 \\ 2 & 2 & 3 \end{array}\right].
\]
We have $x\geq x_1+1=2$ and therefore $R$ is pd if $x>2$ and psd if $x=2$.
If $x=2$ the kernel of $R$ is spanned by $[1, 2, -2]^\top$, which is not in the kernel of $J$ or $K$.
Therefore $M$ is singular only if $c_1=c_2=2$ and $x=x_1+1=2$.
Thus we have $c_2=x_1+1$ and $z_1>0$, which contradicts Assumption~\ref{asm}.
Next we assume $x_1=-x_1'=1$.
Then $\det(R)=2(x+2)z_1$, so $z_1=0$ and 
\[
R=\left[\begin{array}{ccc} x+2 & 0 & 0 
\\ 0 & 2 & 2 \\ 0 & 2 & 2 \end{array}\right].
\]
Again $R$ is pd if $x>2$ and psd if $x=2$.
If $x=2$ the kernel of $R$ is spanned by $[0,1,-1]^\top$, which is not in the kernel of $K$. 
Therefore $c_2=2$ and we have case $(iii)$.
\end{proof}
We claim that $Z=\emptyset$.
Suppose not, and assume $u\in Z$, then $u$ cannot be adjacent to two (or more) vertices of $X$, because that would mean that $A$ has two equals rows which is not possible.
But $u$ has degree at least 2, so there is another vertex $v$ in $Z$ adjacent to $u$.
But then we create a forbidden subgraph
from Figure~\ref{forbidden-all}.
This implies that case~(ii) of the above proposition does not occur.
Note that $G^\sigma_X$ has at most one isolated vertex, since otherwise $A$ has two equal rows. 
Also $G^\sigma_X$ cannot contain a positive isolated edge if $c_2=2$. Indeed, 
interchanging $C_2$ with the positive edge gives a signed graph with Format~($ii$) and $y=0$, 
which has been dealt with in the previous section.
Thus Proposition~\ref{iii-0-p}
leads to just six possible structures for  
$G^\sigma_X$, being:
\\
($i$) $G^\sigma_X$ is a clique of order $x$ with all edges positive,
\\
($ii$) $G^\sigma_X$ is a clique of order $x-1$ with all edges positive extended with an isolated vertex,
\\
($iii$) $c_2>2$ and $G^\sigma_X$ is the disjoint union of $x/2$ positive edges,
\\
($iv$) $c_2>2$ and $G^\sigma_X$ is the disjoint union of $(x-1)/2$ positive edges and one isolated vertex,
\\
($v$) $c_2=2$ and $G^\sigma_X$ is the disjoint union of $x/2$ negative edges,
\\
($vi$) $c_2=2$ and $G^\sigma_X$ is the disjoint union of $(x-1)/2$ negative edges and one isolated vertex.  
\\
For each of these cases the matrix $A$ of $G^\sigma$ has an equitable 
partition with quotient matrices
\[
Q_1=\left[\begin{array}{ccc} 
\!c_1\!-\!1\! & c_2 & x \\ 
c_1 & \!1\!-\!c_2\! & 0 \\ 
c_1 & 0 & \!x\!-\!1\! 
\end{array}\right],\,
Q_2=\left[\begin{array}{cccc} 
\!c_1\!-\!1\! & c_2 & \!x\!-\!1\! & 1 \\ 
c_1 & \!1\!-\!c_2\! & 0 & 0\\ 
c_1 & 0 & \!x\!-\!2\! &0\\
c_1 & 0 & 0 & 0 
\end{array}\right],\,
Q_3=\left[\begin{array}{ccc} 
\!c_1\!-\!1\! & c_2 & x \\ 
c_1 & \!1\!-\!c_2\! & 0 \\ 
c_1 & 0 & 1
\end{array}\right],
\]
\[
Q_4=\left[\begin{array}{cccc} 
\!c_1\!-\!1\! & c_2 & \!x\!-\!1\! & 1 \\ 
c_1 & \!1\!-\!c_2\! & 0 & 0\\ 
c_1 & 0 & 1 &0\\
c_1 & 0 & 0 & 0 
\end{array}\right],\,
Q_5=\left[\begin{array}{cc} 
\!c_1\!-\!1\! & \!x\!+\!2\! \\ 
c_1 & -1 
\end{array}\right],\,
Q_6=\left[\begin{array}{ccc} 
\!c_1\!-\!1\! & \!x\!+\!1\! & 1 \\ 
c_1 & -1 & 0 \\ 
c_1 & 0 & 0
\end{array}\right],
\]
respectively.
Each of these quotient matrices must have at most two eigenvalues different from $\pm 1$. 
For $Q_5$ this is obvious and we find matrix $A_1$ of Theorem~\ref{main}. 
For the other five quotient matrices we checked for eigenvalues $\pm 1$ (simply by computing rank$(Q_i+I)$ and rank$(Q_i-I)$).
For $Q_2$ and $Q_4$ there is at most one eigenvalue equal to $\pm 1$,
and $Q_6$ has no eigenvalue equal to $\pm 1$.
However $Q_1$ has an eigenvalue $\pm 1$ whenever  $(c_1,x)=(3,8), (4,6)$, or $(6,5)$, 
which gives $A_5$ of Theorem~\ref{main} when $k>\ell$.
The quotient matrix $Q_3$ has eigenvalue $\pm 1$ whenever $(c_2,x)=(3,8), (4,6)$, or $(6,5)$, but $x$ is even so only the first two cases remain, and we find $A_6$ of Theorem~\ref{main} (when $m>1$).

{\begin{ccl}
If $G^\sigma$ has adjacency matrix $A$ with Format $(iii)$ with $Y=\emptyset$, then 
$A$ is one of the matrices represented by
$A_1$, $A_5$ (with $k >\ell$), or $A_6$ (with $m>1$). 
\end{ccl}

\section{$X$, $Y$ and $Z$ nonempty}

Take $v\in Z$.
By Proposition~\ref{restrictions}$(vi)$ we may assume that $v$ is adjacent to a vertex $w\in X$ and not adjacent to $Y$.
Consider the matrix $N$ consisting of four rows of $A$ corresponding to a vertex from $C_1$, a vertex from $C_2$, to $w$ and $v$, respectively.
Then
\[
N=
\left[
\begin{array}{clcllll}
0 & \1^\to_{c_1-1} & 0 & ~\1^\to_{c_2-1} & \1^\to_{x} & \0^\to_{y} & \0^\to_z \\[3pt]
1 & \1^\to & 0 & \epsilon\1^\to & \0^\to & \1^\to & \0^\to \\[3pt]
1 & \1^\to & 0 & ~\0^\to & \x_1^\to & \y_1^\to & \z_1^\to \\[3pt]
0 & \0^\to & 0 & ~\0^\to & \x_3^\to & \0^\to   & \z_3^\to
\end{array}
\right]
\]
for $(0,\pm 1)$~vectors $\x_1$, $\x_3$, $\y_1$ $\z_1$, $\z_3$, and $\epsilon=\pm1$, 
where $\epsilon=1$ in case of format~$(ii)$ and $\epsilon=-1$ for format~$(iii)$. 
Note that we may switch such that $\z_1\geq\0$ and that $\y_1\geq\0$ by Proposition~\ref{restrictions}$(v)$.
Define $M=NN^\to-I$, then
\[
M=
\left[
\begin{array}{cccc}
c_1+c_2+x-2  & c_1-1+\epsilon(c_2-1) & c_1+x_1' -1 & x_3' \\[3pt]
c_1-1+\epsilon(c_2-1) & c_1+c_2+y-2 & c_1+y_1 & 0  \\[3pt]
c_1+x_1'-1 & c_1+y_1 & c_1+x_1+y_1+z_1-1 & x_{1,3}+z_{1,3} \\[3pt]
x_3' & 0 & x_{1,3}+z_{1,3} & x_3+z_3-1
\end{array}
\right],
\]
where $x_1=\x_1^\to\x_1$, $x_1'=\x_1^\to\1$,  $x_{1,3}=\x_1^\to\x_3$, $x_3=\x_3^\to\x_3\geq 1$,
$x_3' =\x_3^\to\1\neq 0$,
$y_1=\y_1^\to\y_1=\y_1^\to\1$,
$z_1=\z_1^\to\z_1\geq 1$, $z_3=\z_3^\to\z_3$ and $z_{1,3}=\z_1^\to\z_3$.
Since $M$ is a principal submatrix of $A^2-I$, we know rank$(M)\leq 2$.
Define $M_{i,j}$ to be the matrix obtained from $M$ by deletion of row~$i$ and column $j$.
Then $M_{i,j}$ is singular for $1\leq i,j \leq 4$.
We treat the two considered formats seperately, and start with $(iii)$.

\subsection{Format $(iii)$}
In this case $\epsilon=-1$, $c_1,c_2\geq 2$ and we write
$M_{3,3} = (c_1-2)K +(c_2-2)L + R$, where 
\[ K=\left[\begin{array}{ccc}
1 & 1 & 0\\1 & 1 & 0\\0 & 0 & 0
\end{array}\right],~~
L=\left[\begin{array}{rrr}
1 & -1 & 0\\-1 & 1 & 0\\0 & 0 & 0
\end{array}\right],~~
R=\left[\begin{array}{ccc}
x+2 & 0 & x_3'\\0 & y+2 & 0\\x_3' & 0 & x_3+z_3-1
\end{array}\right].
\]
Clearly $K$ and $L$ are psd, so if $R$ is pd, then $M_{3,3}$ is non-singular. 
Therefore $R$ is not pd and 
$0\geq \det(R) = (y+2)((x+2)(x_3+z_3-1)-(x_3')^2)$.
This implies $z_3=0$, $x_3=|x_3'|=x$ and $x_3\leq 2$. 
We know $x_3>0$, and if $x_3=1$ then $x_3'\neq 0$ and 
$\det(M_{3,3})=-(y+2)(x_3')^2\neq 0$.
Thus $x_3=2$.
Then $\det(R)=0$ and the kernel of $R$ is spanned by
$[1, 0, -2]^\top$, which is not in the kernel of $K$ and $L$, and therefore $M_{3,3}$ is singular only if $c_1=c_2=2$.
Thus $x=x_3=c_1=c_2=2$, $z_3=0$ and
\[
M_{1,1}=(y_1+2)K+
\left[\begin{array}{ccc}
y-y_1 & 0 & 0\\0 & x_1+z_1-1 & x_1'\\0 & x_1' & 1
\end{array}\right].
\]
One easily verifies that both terms in this formula are psd
(recall that $|x_1|\leq x_1\leq 1$ and $z_1\geq 1$), and that the kernel of the sum is only nontrivial if $y=y_1$ and $z_1=1$.
If $x_1=0$, then the two rows of $A$ corresponding to $X$ are equal, which contradicts Lemma~\ref{deg1}($ii$), and $x_1=1$ is ruled out by Proposition~\ref{restrictions}($iii$).
So we conclude that $Z=\emptyset$ in case of format~$(iii)$.
(But notice that without Assumption~\ref{asm} we would have found $A_{17}$.)  

\subsection{Format~($ii$)}
 
In this case $\epsilon=1$, $c=c_1+c_2\geq 3$ and $M_{3,3}$ can be written as: $M_{3,3} = (c-2)K + R$, with $K$ as before and
\[ R=
\left[\begin{array}{ccc}
x & 0 & x_3'\\0 & y+1 & 0\\x_3' & 0 & x_3+z_3-1
\end{array}\right].
\]
With similar arguments as above we see that $R$, and hence $M_{3,3}$ is pd if $z_3>0$, if $|x_3'|<x_3$ and if $x_3<x-1$.
Therefore $z_3=0$, $|x_3'|=x_3$ and $x_3\in\{x,x-1\}$.

Assume that $x_3=x-1$.
Then using Gaussian elimination we easily obtain that $y(c-2)(x-2)-(y+c-2)=\det(M_{3,3})=0$. 
Hence $x-2=1/(c-2)+1/y$,
which leads to $(c,x,y)=(3,4,1)$, or $(4,3,2)$.
In both cases there are just a few possibilities for $A$, and it follows by straightforward verification that
there are no solutions (but without Assumption~\ref{asm} we would have found adjacency matrices $A_{8}$ with $m=1$, and $A_{17}$).

Next assume that $x_3=x$.
Then 
\[ M_{3,3}=
\left[\begin{array}{ccc}
x+c-2 & c-2 & x\\c-2 & y+c-2 & 0\\x & 0 & x-1
\end{array}\right].
\]
and $\det(M_{3,3})=y(c-2)(x-1)-x(y+c-2)=0$,
which implies $c-3=(x+y)/(xy-x-y)$.
The latter equations has only ten feasible solutions, being: 
$(c,x,y)=(4,4,4)$, $(4,3,6)$, $(4,6,3)$, $(5,3,3)$, $(5,2,6)$, $(5,6,2)$, $(6,2,4)$, $(6,4,2)$, $(8,2,3)$, $(8,3,2)$.
Observe that if $Z$ has a second vertex  $u$ then it cannot be adjacent to all vertices in $X$ (otherwise $A$ would have two indentaical rows), so $u$ is adjacent to all vertices of $Y$ and none of $X$.
This shows that $z\leq 2$.
Using this observation we find that for each of the above triples there are only a few possibilities for $A$, and we check them case by case.
The cases $(5,2,6)$ and $(4,3,6)$ lead to matrix $A_{10}$,
and the cases $(5,3,3)$ and $(4,4,4)$ lead to case $(vi)$ of Theorem~1 from \cite{CHVW}.
The other six cases have no solution.
Thus we conclude:

\begin{ccl}
If $A$ has format $(ii)$ or $(iii)$ with $X$, $Y$ and $Z$ nonempty, then $A=A_{10}$, or $A$ is the unsigned case~$(vi)$ of Theorem~1 from \cite{CHVW}.
\end{ccl}

\section{Format(iii) with $X$, $Y$ nonempty and $Z$ empty}

Throughout this section $G^\sigma$ has adjacency matrix $A$ with format~$(iii)$, $X, Y\neq\emptyset$ and $Z=\emptyset$.
Consider the matrix $N$ consisting of four rows of $A$ corresponding to a vertex from $C_1$, 
a vertex from $C_2$, a vertex $v$ from $X$ and a vertex from $w$ from $Y$.
Then
\[
N=
\left[
\begin{array}{clclll}
0 & \1^\to_{c_1-1} & 0 & ~\1^\to_{c_2-1} & \1^\to_{x} & \0^\to_{y} \\[3pt]
1 & \1^\to         & 0 & -\1^\to         & \0^\to     & \1^\to     \\[3pt]
1 & \1^\to         & 0 & ~\0^\to         & \x_1^\to   & \y_1^\to   \\[3pt]
0 & \0^\to         & 1 & ~\1^\to         & \x_2^\to   & \y_2^\to   
\end{array}
\right]
\]
for $(0,\pm 1)$~vectors $\x_1$, $\y_2$, and $(0,1)$~vectors $\y_1$, $\x_2$.
Define $M=NN^\to-I$, then
\[
M=
\left[
\begin{array}{cccc}
c_1+c_2+x-2  & c_1-c_2 & c_1+x_1'-1 & c_2+x_2 \\[3pt]
c_1-c_2 & c_1+c_2+y-2 & c_1+y_1 & 1-c_2+y'_2 \\[3pt]
c_1+x_1'-1 & c_1+y_1 & c_1+x_1+y_1-1 & x_{1,2}+y_{1,2} \\[3pt]
c_2+x_2 & 1-c_2+y'_2 & x_{1,2}+y_{1,2} & c_2+x_2+y_2-1
\end{array}
\right],
\]
where $x_1=\x_1^\to\x_1$,
$x_1'=\x_1^\to\1$,  
$x_{1,2}=\x_1^\to\x_2$, 
$x_2=\x_2^\to\x_2=\x_2^\to\1$,
$y_1=\y_1^\to\y_1=\y_1^\to\1$,
$y_2=\y_2^\to\y_2$,
$y'_2=\y_2^\to\1$,  
and $y_{1,2}=\y_1^\to\y_2$.
As before we define $M_{i,j}$ to be the matrix obtained from $M$ by deletion of row~$i$ and column $j$.
We have rank$(M)\leq 2$, hence $\det(M_{i,j}) = 0$ for $1\leq i,j \leq 4$.

\begin{prp}
Suppose $y-y_1\geq 2$ for some vertex in $X$, then one of the following holds.
\\
$(i)$: $x_1=x_1'=1$, $y_1=0$, $x=y=2$ which leads to matrix $A_4$ with $m,\ell\geq 2$.
\\
$(ii)$: $x_1=0$, $y_1=1$, $x=y=3$ and $A=A_{16}$,
\\
$(iii)$: $x_1=y_1=0$, $x_2=y_2=0$, $x=y=3$ and $A=A_{17}$,
\\
$(iv)$: $x_1=y_1=0$, $x_2=x-1$, $y_2=1$, which leads to $A_{8}$ with $m\geq 2$, $A_{18}$, or $A_{19}$.
\end{prp}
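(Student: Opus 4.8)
The plan is to read off the structure of the chosen vertex $v$ from the $3\times3$ principal submatrix $M_{4,4}$ of $M$ (the rows and columns indexed by the two clique representatives and by $v$). As a principal submatrix of $A^2-I$, it is positive semidefinite of rank at most $2$, hence singular. Writing $J$ for the $3\times3$ all-ones matrix and
\[
L=\begin{bmatrix} 1 & -1 & 0\\ -1 & 1 & 0\\ 0 & 0 & 0\end{bmatrix},\qquad
R=\begin{bmatrix} x+2 & 0 & x_1'+1\\ 0 & y+2 & y_1+2\\ x_1'+1 & y_1+2 & x_1+y_1+1\end{bmatrix},
\]
a direct check gives $M_{4,4}=(c_1-2)J+(c_2-2)L+R$. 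Since $c_1,c_2\ge2$ the first two summands are psd, so $M_{4,4}$ cannot be positive definite; as the two leading principal minors of $R$ are positive, this forces $\det R\le0$.

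Next I would convert $\det R\le0$ into the possible ``heads''. Proposition~\ref{restrictions}$(iii,iv)$ constrains the signs in $G^\sigma_X$ so that $x_1'\in\{x_1,-x_1\}$, while $y_1\ge0$, and the hypothesis supplies $y\ge y_1+2$; together with $x\ge x_1+1$ these force $x_1\le1$, leaving $(x_1,y_1)\in\{(1,0),(0,1),(0,0)\}$. For instance when $x_1=x_1'=1,\ y_1=0$ one computes $\det R=2xy-8$, so $xy\le4$ and hence $x=y=2$. For each head I would then compute $\ker R$ and test whether it lies in $\ker J\cap\ker L$: in the head $(1,0)$ the kernel is spanned by $(1,1,-2)^{\top}$, which lies in both kernels, so $M_{4,4}$ is singular for \emph{all} $c_1,c_2$, and reading off $x=y=2$ yields case~$(i)$, i.e. $A_4$ with $m,\ell\ge2$. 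In the heads $(0,1)$ and $(0,0)$ the kernel vector lies in neither $\ker J$ nor $\ker L$, so singularity forces $c_1=c_2=2$, and $\det R\le0$ then pins down $y$ ($y=3$ in head $(0,1)$; $y\in\{2,3\}$ in head $(0,0)$).

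It remains to recover the ``$w$-side'' data $x_2,y_2,y_2',x_{1,2},y_{1,2}$ and the edge signs. Here I would impose the vanishing of the remaining minors $M_{i,j}$ (equivalently, that the full $4\times4$ matrix $M$ has rank $2$), together with Proposition~\ref{restrictions}$(v)$ (no negative $X$–$Y$ edge) and Lemma~\ref{deg1}$(ii)$ (no repeated rows). In head $(0,1)$ this gives $x=y=3$ with $G^\sigma_X,G^\sigma_Y$ cocliques joined by a perfect matching, i.e. $A_{16}$ (case~$(ii)$). In head $(0,0)$ one splits on $w$: if $x_2=y_2=0$ the same analysis yields $x=y=3$ and $A_{17}$ (case~$(iii)$); otherwise $\det R\le0$ forces $y=2$ (with $x$ now unbounded) and the minor conditions give $x_2=x-1,\ y_2=1$, which is case~$(iv)$.

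The main obstacle is case~$(iv)$. Unlike the others it carries an unbounded parameter, so it is not a finite check: one must deduce the entire structure of $G^\sigma_Y$ and of the $X$–$Y$ adjacency from the global rank-$2$ condition, and the resulting internal structure then splits further into the three representatives $A_{8}$ (with $m\ge2$), $A_{18}$, and $A_{19}$. The delicate point throughout is the sign bookkeeping: tracking which edges are forced positive or negative by Proposition~\ref{restrictions}, then applying the correct switching and negation to match the normal forms in Theorem~\ref{main}, all while verifying that Assumption~\ref{asm} is respected.
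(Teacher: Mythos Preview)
Your overall strategy --- decompose $M_{4,4}$ as a sum of psd rank-one pieces plus a residual $R$, then use singularity to constrain the parameters --- is the paper's approach, and your treatment of head $(x_1,y_1)=(1,0)$ leading to case~$(i)$ is correct. But the argument breaks down for heads $(0,1)$ and especially $(0,0)$.

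The concrete error is the claim that in head $(0,0)$ ``the kernel vector lies in neither $\ker J$ nor $\ker L$, so singularity forces $c_1=c_2=2$''. First, the kernel-intersection argument is only valid when $R$ is psd, i.e.\ when $\det R=0$; for $\det R<0$ (e.g.\ $y=2$, where $\det R=-4$ regardless of $x$) the matrix $R$ is indefinite and the sum $(c_1-2)J+(c_2-2)L+R$ can perfectly well be singular psd without any kernel-intersection condition. Second, even when $\det R=0$ your claim is false: for $(x,y)=(1,4)$ one finds $\ker R=\langle(1,1,-3)^\top\rangle$, and this vector \emph{does} lie in $\ker L$, so only $c_1=2$ is forced while $c_2$ remains free. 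This is precisely the $A_8$ family, which has $c_2=m\ge2$ arbitrary; likewise $A_{19}$ has $c_2\in\{3,4\}$. So your conclusion $c_1=c_2=2$ is wrong for case~$(iv)$, and your restriction ``$y\in\{2,3\}$'' (which tacitly assumes $x\ge2$) misses $(x,y)=(1,4)$. You also omit the head $(x_1,y_1)=(0,2)$ with $(x,y)=(1,4)$, which the paper lists and then rules out separately.

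The paper avoids these pitfalls in two ways. It peels off a further psd summand $y_1K$ (so its residual really is psd in the borderline cases it needs), and --- crucially --- once $x_1=y_1=0$ it abandons $M_{4,4}$ altogether and switches to $\det M_{2,2}$ and $\det M_{1,2}$ to extract $x_2,y_2,y_2'$, then finishes case~$(iv)$ via equitable-partition quotient matrices for each possible structure of $G^\sigma_X$ ($K_1$, $K_1+K_{x-1}$, or isolated signed edges plus $K_1$). Your sketch of case~$(iv)$ (``the minor conditions give $x_2=x-1,\ y_2=1$'') is pointing in the right direction but needs this quotient-matrix machinery to actually produce $A_8$, $A_{18}$, $A_{19}$ with their specific parameter values.
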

\begin{proof}
Write $M_{4,4}=(c_1-2)J+(c_2-2)L+y_1 K + R$, where
\[ 
L=\left[
\begin{array}{rrc}
1&-1&0 \\ -1&1&0 \\ 0&0&0
\end{array}
\right],\ 
K=\left[\begin{array}{rrc}
0&0&0 \\ 0&1&1 \\ 0&1&1
\end{array}
\right],\ \mbox{ and }
R=\left[\begin{array}{ccc}
x+2&0&x_1'+1 \\ 0&y-y_1+2&2 \\ x_1'+1&2&x_1+1
\end{array}
\right].
\]
Clearly $J$, $K$ and $L$ are psd and $R$ is pd, unless $x_1=0$,
or $x_1=x_1'=x-1=1$ and $y-y_1=2$.
In the latter case 
$R$ is psd with kernel spanned by $[1, 1, -2]^\to$, which is in the kernel of $M_{4,4}$ only if $y_1=0$.
Therefore $x=y=2$, and we obtain the matrices $A_4$ with $m,\ell\geq 2$.
\\
So we have that $x_1=0$. 
If $y_1\geq 1$, then we use that $R+y_1 K$ is not pd.
By straightforward computation we get 
$\det(R+y_1 K)=(y+2)(x+2)(y_1+1)-(x+2)(y_1+2)^2-(y+2)$.
Using $y-y_1\geq 2$ this gives 
$\det(R+y_1 K)\geq (x+1)y_1-4$ and equality implies $y-y_1=2$.
Therefore $y_1=0$, or 
$y_1=1$, $y=3$ and $x\leq 3$, or
$y_1=2$, $y=4$ and $x=1$.
If $(x,x_1,y,y_1)=(3,0,3,1)$ then $R+y_1 K$
is psd and the kernel is spanned by $[1,3,-5]^\to$
which is in the kernel of $J$ nor $K$.
Hence $M_{4,4}$ is nonsingular, 
unless $c_1=c_2=2$ in which case we obtain $A_{16}$. 
It is not difficult to rule out the remaining cases
by use of Lemma~\ref{deg1}$(ii)$, 
Proposition~\ref{restrictions}$(ii)$, 
and the last forbidden subgraph of Figure~\ref{forbidden-all}.
\\
What remains is the case $x_1=y_1=0$.
Using Gaussian elimination in $M_{2,2}$ we find that in this case 
$\det(M_{2,2})=(y_2-1)(c_2+x-1)+(x-x_2-1)(c_2+x_2)$.
Note that $y_1=0$ implies that $x_2\leq x-1$.
Therefore $M_{2,2}$ is singular only if 
$(a)$: $y_2=0$ and $c_2+x-1=(x-x_2-1)(c_2+x_2)$,
or $(b)$: $y_2=1$, $x=x_2+1$.
In case $(a)$ we get $c_2=2$, $x=3$, $x_2=y_2=0$.
Since no two rows of $A$ are equal we have that 
there is exactly one vertex $v\in X$ for which $x_1=y_1=0$, and
at most one vertex $w\in Y$ for which $x_2=y_2=0$.
Using this observation we find that case $(a)$ leads to $A_{17}$, and that otherwise $(b)$ holds for all vertices of $Y$. 
Moreover, $\det(M_{1,2})=(1-c_1)(c_2+x-1)(y_2'+1)$, implies $y_2'=-1$.
From Proposition~\ref{restrictions}$(iii)$ we deduce that $G^\sigma_X$ is equal to $K_1$, or $K_1+K_{x-1}$ (all signs are positive),
or consists of one isolated vertex and $(x-1)/2$ isolated signed edges.
\\
If $G^\sigma_X=K_1$ then we have an equitable partition of $A$ with quotient matrix
\[
Q=\left[
\begin{array}{cccc}
c_1-1 & c_2   & 1 & 0 \\ 
c_1   & 1-c_2 & 0 & y \\
c_1   & 0     & 0 & 0 \\
0     & c_2   & 0 & -1
\end{array}
\right].
\]
By Gaussian elimination in $Q+I$ and $Q-I$ we find that 
$Q$ has two eigenvalues equal to $\pm 1$ only if
$c_1=2$ and $y=4$. 
This gives $A_{8}$ with $m\geq 2$.
\\
If $G^\sigma_X=K_1+K_{x-1}$ then 
we have an equitable partition of $A$ with quotient matrix
\[
Q=\left[
\begin{array}{ccccc}
c_1-1 & c_2   & 1 & x-1 & 0 \\ 
c_1   & 1-c_2 & 0 & 0 & y \\
c_1   & 0     & 0 & 0 & 0 \\
c_1   & 0     & 0 & x-2 & y \\
0     & c_2   & 0 & x-1 & -1
\end{array}
\right].
\]
By Gaussian elimination in $Q+I$ and $Q-I$ we find that 
$Q$ has at least three eigenvalues unequal to $\pm 1$. 
\\
If $G^\sigma_X$ has an negative isolated edge $e$ then $C_2=2$ by Theorem~\ref{restrictions}$(iv,v)$.
If $v$ is a vertex of $e$, then $c_2=2$ $x_1=-x_1'=1$, $y_1=y$, $x_2=x-1$, $y_2=-y_2'=1$ and
$\det(M_{2,2})=(x+1)(c_1 y-y-c_1)-4(c_1-1)=0$.
Using that $x>1$ and odd, and $y\geq 2$ and even,
we find just two solutions: $(c_1,x,y)=(3,5,2)$ and 
$(4,7,2)$ and obtain $A_{18}$.
\\
Assume $G^\sigma_X$ has a positive isolated edge $e$.
Take $v\in e$.
Then $x_1=x_1'=1$, $y_1=y$, $x_2=x-1$, $y_2=-y_2'=1$ and $\det(M_{2,2})=(c_2+x-1)(c_1 y-y-c_1)=0$,
which implies $c_1=y=2$.
Since $c_1\neq 3$ or $4$, $G^\sigma_X$ has no
negative isolated edge. 
This means that $A$ has an equitable partition
with quotient matrix
\[
Q=\left[
\begin{array}{ccccc}
1 & c_2   & 1 & x-1 & 0 \\ 
2 & 1-c_2 & 0 & 0   & 2 \\
2 & 0     & 0 & 0   & 0 \\
2 & 0     & 0 & 1   & 2 \\
0 & c_2   & 0 & x-1 & -1
\end{array}
\right].
\]
Using Gaussian elimination in $Q+I$ and $Q-I$ we find that $Q$ has three eigenvalues equal to $\pm 1$ if and only if $(c_2,x)=(3,7)$ or $(4,5)$.
Thus we obtain $A_{19}$. 
\end{proof}

Suppose the matrix $A$ does not belong to one of the cases of the above proposition. 
Then each vertex from $X$ is adjacent to $y$ or $y-1$ vertices of $Y$.  
And, by interchanging $C_1$ with $C_2$ and $X$ with $Y$ it also follows that each vertex from $Y$ is adjacent to $x$ or $x-1$ vertices of $X$.
So it follows that the adjacency matrix of $G^\sigma _{X\cup Y}$ takes the form:
\[
A_{X\cup Y}=
\left[ 
\begin{array}{cc}
A_X & B \\
B^\to & A_Y
\end{array}
\right] 
\mbox{ with }
B=
\left[ 
\begin{array}{cc}
J & J \\
J-I_m & J
\end{array}
\right] 
\mbox{ and }
0\leq m\leq\min\{x,y\}.
\] 
\begin{prp}
With $B$ as above, one of the following holds:
\\
$(i)$ $m=x=y$, $B=J-I$ which leads to $A_{11}$,
\\
$(ii)$ $m=0$, $B=J$ which leads to $A_2$, $A_{12}$, $A_{13}$, $A_{14}$, or $A_{15}$.
\end{prp}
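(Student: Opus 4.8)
The plan is to use the rank-two structure of $A^2-I$ to pin down the matching size $m$, and then to read off the admissible matrices in each surviving case from an equitable partition. Throughout I write $B=K_{x,y}$ with a matching of size $m$ deleted, the deleted matching being recorded by the block $J-I_m$.

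I would begin by recording how the removed matching interacts with the internal structure of $X$ and $Y$. The row of $B$ attached to an $X$-vertex is $\1$ if that vertex is unmatched and equals $\1$ with a single entry deleted if it is matched; hence these rows are pairwise distinct for matched vertices and all distinct from $\1$. By Proposition~\ref{restrictions}$(ii)$ two $X$-vertices lying in a common clique of $G^\sigma_X$ have identical rows in $A+I$ or $A-I$, and therefore identical rows of $B$. Consequently every matched vertex of $X$ is isolated in $G^\sigma_X$, and symmetrically in $G^\sigma_Y$. In particular, if $m=x=y$ then $X$ and $Y$ are both cocliques.

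Next I would establish the dichotomy by excluding $m\ge1$ together with $m<x$ or $m<y$. Suppose $m\ge1$ and, say, $m<y$, so there are a matched vertex $w_1\in Y$ (isolated in $G^\sigma_Y$) and an unmatched vertex $w_2\in Y$. The columns of $A$ indexed by $w_1$ and $w_2$ agree on $C_1\cup C_2$, differ in exactly one place on $X$ (at the partner of $w_1$), and since $w_1$ is isolated one checks $w_1^\top w_1=w_1^\top w_2=c_2+x-1$; Lemma~\ref{deg1}$(ii)$ then forces these columns to differ in at least three places, so $w_2$ lies in a clique of $G^\sigma_Y$ of order at least $3$. Together with Proposition~\ref{restrictions}$(iv)$ this pushes all unmatched vertices of $Y$ into a single clique, and (by the symmetric statement) the same holds for $X$. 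The resulting configuration then admits an equitable partition into $C_1,C_2$ and the matched and unmatched parts of $X$ and $Y$, and a direct computation of the associated quotient matrix shows it cannot have only two eigenvalues off $\{\pm1\}$, a contradiction. The symmetric argument, interchanging $C_1\leftrightarrow C_2$ and $X\leftrightarrow Y$, rules out $m<x$, leaving $m=0$ or $m=x=y$.

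For $m=x=y$ the sets $X$ and $Y$ are cocliques, $\{C_1,C_2,X,Y\}$ is equitable (each $X$- and $Y$-vertex meets the opposite side in exactly $x-1$ vertices), and imposing that the $4\times4$ quotient has exactly two eigenvalues different from $\pm1$ leaves the finitely many parameter values listed as $A_{11}$. For $m=0$ the join between $X$ and $Y$ is complete, so grouping $C_1,C_2$ with the cliques of $G^\sigma_X$ and $G^\sigma_Y$ (which by Proposition~\ref{restrictions} are unions of equal-order cliques with prescribed signs) again yields an equitable partition whose quotient carries at most two eigenvalues off $\{\pm1\}$. This bounds the number and the orders of those cliques, and a finite case analysis, discarding spurious profiles by Assumption~\ref{asm}, Lemma~\ref{deg1}, and the sign restrictions of Proposition~\ref{restrictions}$(iii)$--$(v)$, produces exactly $A_2,A_{12},A_{13},A_{14},A_{15}$. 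The main obstacle is twofold: the intermediate matchings are delicate to eliminate, since Assumption~\ref{asm} alone does not rule out the borderline case in which the unmatched clique has order $c_1$, so one genuinely needs the quotient-spectrum computation to close it; and the enumeration when $m=0$ is the bulk of the work, as every admissible clique profile for $X$ and $Y$ must be generated and each feasible quotient spectrum matched to the correct $A_i$.
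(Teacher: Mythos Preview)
Your plan is essentially the paper's own argument: both use Proposition~\ref{restrictions}$(ii)$ to force matched vertices to be isolated in $G^\sigma_X$ and $G^\sigma_Y$, both invoke Lemma~\ref{deg1}$(ii)$ to show the unmatched part of $X$ (or $Y$) cannot contain an isolated vertex or isolated edge, hence by Proposition~\ref{restrictions}$(iv)$ is a single clique, and both then kill the intermediate values $0<m<\max\{x,y\}$ by computing the spectrum of the resulting quotient matrix (the paper distinguishes a $6\times 6$ quotient for $0<m<\min\{x,y\}$ from a $5\times 5$ one for $m=x<y$ or $m=y<x$, which your sketch collapses).

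One point is understated in your $m=0$ treatment. The phrase ``unions of equal-order cliques with prescribed signs'' is not what Proposition~\ref{restrictions} delivers: the admissible structures for $G^\sigma_X$ are in fact $K_x$, $K_{x-1}+K_1$, or $x/2$ disjoint signed edges (and analogously for $Y$), and in the last case the signs are \emph{not} a priori uniform when $c_2=2$. The paper needs a separate $NN^\top-I$ singularity computation, taking one vertex from a positive edge, one from a negative edge, and one from $C_1$, to show that mixed signs cannot occur; without this step the partition into $\{C_1,C_2,X,Y\}$ is not equitable and the quotient argument does not get off the ground. You should build this check into your case analysis rather than assume it away.
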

\begin{proof}
Let $X_m$ and $Y_m$ be the subsets of $X$ and $Y$ corresponding to the submatrix $J-I_m$ of $B$.
From Proposition~\ref{restrictions}$(ii)$ it follows that $X_m$ and $Y_m$ consists of isolated vertices in $\Gamma^\sigma_X$ and $\Gamma^\sigma_Y$, respectively. 
So if $m=x=y$ then $\Gamma^\sigma_X$ and $\Gamma^\sigma_Y$
have no edges and $A$ has the block structure of $A_{12}$.
The values of $m$, $\ell$ and $k$ follow from the spectrum of the quotient matrix.
\\
If $0<m<x$ then the subgraph $\Gamma^\sigma_{X\setminus X_m}$ of 
$\Gamma^\sigma_X$ induced by $X\setminus X_m$ cannot have an isolated edge or vertex (because of Lemma~\ref{deg1}$(ii)$), 
and therefore $\Gamma^\sigma_{X\setminus X_m} = K_{x-m}$
(because of Proposition~\ref{restrictions}$(iv)$).
Similarly, if $0<m<y$ then $\Gamma^\sigma_{Y\setminus Y_m} = K^-_{y-m}$.
Thus, if $0<m<\min\{x,y\}$ then $A$ has an equitable partition 
with quotient matrix
\[
Q=\left[
\begin{array}{cccccc}
c_1-1 & c_2   & x-m   & m   & 0   & 0   \\ 
c_1   & 1-c_2 & 0     & 0   & m   & y-m \\
c_1   & 0     & x-m-1 & 0   & m   & y-m \\
c_1   & 0     & 0     & 0   & m-1 & y-m \\
0     & c_2   & x-m   & m-1 & 0   & 0   \\
0     & c_2   & x-m   & m   & 0   & m-y+1 
\end{array}
\right].
\]
However $Q$ does not have four eigenvalues equal to $\pm 1$.
A quick way to verify this is by Gaussian elimination in $Q+I$ with row~2 and column~6 deleted and in $Q-I$ with row~1 and column~3 deleted.
Then we find that $Q+I$ and $Q-I$ both have rank at least~5.
A similar argument works if $m=x<y$, or $m=y<x$. 
Then we have an equitable partition with a $5\times 5$ quotient matrix with at most two eigenvalues equal to $\pm 1$. 
Thus we can conclude that $m=0$ and $B=J$. 
Now $\Gamma^\sigma_X$ may have an isolated vertex, but 
Lemma~\ref{deg1} still implies that $\Gamma^\sigma_X$ cannot have 
two isolated vertices, or an isolated vertex and an isolated edge.
Therefore by use of Proposition~\ref{restrictions}$(iii)$ 
there are just a few possibilities for $\Gamma^\sigma_X$, being:
(a): $\Gamma^\sigma_X=K_x$ ($x\geq 3$),
(b): $\Gamma^\sigma_X=K_{x-1}+K_1$ ($x\geq 4$),
(c): $\Gamma^\sigma_X$ consists of $x/2$ isolated signed edges.
For $\Gamma^\sigma_Y$ we obtain the same list with
$y$, $K^-_y$ and $K^-_{y-1}$ instead of $x$, $K_x$ and $K_{x-1}$. 
\\
If $\Gamma^\sigma_X=K_x$ and $\Gamma^\sigma_Y=K^-_y$, then $A$ has an equitable partition of $A_{11}$, and the quotient matrix $Q$ has two eigenvalues equal to $\pm 1$ only in the given cases 
(taking Assumption \ref{asm} into account). 
If $\Gamma^\sigma_X=K_x$ and $\Gamma^\sigma_Y=K^-_{y-1}+K_1$
(or $\Gamma^\sigma_X=K_x+K_1$ and $\Gamma^\sigma_Y=K^-_{y}$),
then we have an equitable partition of $A$ with $5\times 5$ quotient matrix $Q$, and by Gaussian elemination it follows that $Q+I$ and $Q-I$
both have rank at least 4, and therefore $Q$ (and $A$) have at least three eigenvalues unequal to $\pm 1$.
If $\Gamma^\sigma_X=K_{x-1}+K_1$ and $\Gamma^\sigma_Y=K^-_{y-1}+K_1$,
then the equitable partition has a $6\times 6$ quotient matrix $Q$,
and both $Q+I$ and $Q-I$ both have rank at least 5. 
So also in this case we find no solution.  
For the remainder of the proof we may assume that $\Gamma^\sigma_X$ consists of $x/2$ disjoint signed edges. 
We claim that all these edges have the same sign. 
Indeed, suppose $\Gamma^\sigma_X$ has a negative and a positive edge, then we consider the matrix $N$ consisting of three rows
of $A$ corresponding to a vertex of the negative edge, a vertex of the positive edge and a vertex from $C_1$. 
Then 
\[
NN^\to-I=\left[ 
\begin{array}{ccc}
c_1+y & c_1+y & c_1\\
c_1+y & c_1+y & c_1-2\\
c_1   & c_1-2 & c_1+c_2+x-2\\
\end{array}
\right] 
\]
is nonsingular, which proves the claim.
\\
Suppose that all edges of $\Gamma^\sigma_X$ have a negative sign.
Then Proposition~\ref{restrictions}$(iv)$ implies that $c_2=2$.
If $\Gamma^\sigma_Y=K^-_y$ then $A$ has an equitable partition
with three parts and a quotient matrix with no eigenvalue equal to $\pm 1$.
If $\Gamma^\sigma_Y=K^-_{y-1}+K_1$ then $A$ has an equitable partition with four parts and quotient matrix
\[
Q=\left[
\begin{array}{cccc}
c_1-1 & x+2 & 0   & 0 \\ 
c_1   & -1  & y-1 & 1 \\
0     & x+2 & 2-y & 0 \\
0     & x+2 & 0   & 0 
\end{array}
\right].
\]
We easily obtain that $Q+I$ is singular only if $y=5$,
and that $Q-I$ is singular only if $(c_1,y)=(5,4)$ or $(6,2)$.
Thus we find $A_{13}$.
If $\Gamma^\sigma_Y$ consists of $y/2$ positive edges, then 
also $c_1=2$, and we obtain $A_2$.
If $\Gamma^\sigma_Y$ consists of $y/2$ negative edges,
then we find an equitable partition with quotient
\[
Q=\left[
\begin{array}{ccc}
c_1-1 & x+2 & 0    \\ 
c_1   & -1  & y \\
0     & x+2 & -1
\end{array}
\right].
\]
Now $Q+I$ is nonsingular and $Q-I$ is singular only if $(c_1,x,y)=(6,2,4)$, or $(5,4,4)$. 
Thus we find $A_{14}$.
\\
Next suppose that $\Gamma^\sigma_X$ consists of positive edges.
If $\Gamma^\sigma_Y=K^-_y$, then $A$ has an equitable partition for which the quotient matrix has at least three eigenvalues unequal to $\pm 1$.
If $\Gamma^\sigma_Y=K^-_{y-1}+K_1$, then $A$ has an equitable partition with five parts and quotient matrix $Q$ for which 
it is easily seen that both $Q+I$ and $Q-I$ have rank at least $4$, so we find no solution.
If $\Gamma^\sigma_Y$ has $y/2$ disjoint positive edges, then we obtain $A_{14}$ after by switching taking the negative.
If $\Gamma^\sigma_Y$ has $y/2$ disjoint negative edges,
then $A$ has an equitable partitions with quotient matrix 
\[
Q= \left[
\begin{array}{cccc}
c_1-1 & c_2   & x & 0 \\ 
c_1   & 1-c_2 & 0 & y \\
c_1   & 0     & 1 & y \\
0     & c_2   & x & -1
\end{array}
\right].
\]
Using that $x$ and $y$ are both even we find that
both $Q-I$ and $Q+I$ have rank~3 only if 
$(c_1,c_2,x,y)=(3,3,6,6)$, $(4,3,6,4)$, or $(4,4,4,4)$.
Thus we find $A_{15}$.
\end{proof}
\begin{ccl}
If $G^\sigma$ has adjacency matrix $A$ with Format $(iii)$ with $Z=\emptyset$ and $X,Y \neq\emptyset$ then 
$A$ equals 
$A_2$, $A_4$, $A_{8}$, $A_{12}$, $A_{13}$, $A_{14}$, $A_{15}$, $A_{16}$, $A_{17}$, $A_{18}$, or $A_{19}$. 
\end{ccl}

\section{Format(ii) with $X$, $Y$ nonempty and $Z$ empty}
In this section $G^\sigma$ has adjacency matrix $A$ with format~$(ii)$, $X, Y\neq\emptyset$ and $Z=\emptyset$.
We use the notation of the previous section.
Again the matrix $N$ consists of four rows of $A$ corresponding to a vertex from $C_1$, 
a vertex from $C_2$, a vertex $v$ from $X$ and a vertex from $w$ from $Y$. 
Then  
\[
M=NN^\to-I=
\left[
\begin{array}{cccc}
c_1+c_2+x-2  & c_1+c_2-2 & c_1+x_1'-1 & c_2+x_2 \\[3pt]
c_1+c_2-2 & c_1+c_2+y-2 & c_1+y_1 & c_2-1+y'_2 \\[3pt]
c_1+x_1'-1 & c_1+y_1 & c_1+x_1+y_1-1 & x_{1,2}+y_{1,2} \\[3pt]
c_2+x_2 & c_2-1+y'_2 & x_{1,2}+y_{1,2} & c_2+x_2+y_2-1
\end{array}
\right].
\]
First we consider the case that $c_1,c_2\geq 2$.
Then we can imitate the steps of the previous section.  

\subsection{$c_1\geq 2$ and $c_2\geq 2$}

\begin{prp}\label{12-1}  
Suppose $y-y_1\geq 2$ for some vertex in $X$, then $A=A_9$.
\end{prp}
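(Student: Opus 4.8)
The plan is to follow the template of the analysis of format~$(iii)$ in the previous section, exploiting the fact that for format~$(ii)$ the only change in $M$ is that the off-diagonal $(1,2)$ and $(2,1)$ entries become $c_1+c_2-2$ instead of $c_1-c_2$, and the $(2,4)$ entry becomes $c_2-1+y_2'$ instead of $1-c_2+y_2'$. The goal is to show that the single surviving configuration forces $A=A_9$.

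First I would isolate the structural consequences of $y-y_1\ge 2$ by examining the principal submatrix $M_{4,4}$ (delete the last row and column) and writing it as a sum of positive semidefinite pieces plus a residual matrix $R$, exactly as was done for format~$(iii)$. The decomposition $M_{4,4}=(c_1+c_2-2)K+y_1 K'+R$ for suitable fixed psd matrices $K,K'$ and
\[
R=\left[\begin{array}{ccc}
x+2 & 0 & x_1'+1 \\ 0 & y-y_1+2 & 2 \\ x_1'+1 & 2 & x_1+1
\end{array}\right]
\]
should isolate the same arithmetic constraints. Since $K,K'$ are psd, $M_{4,4}$ is nonsingular whenever $R$ is positive definite; the rank-$2$ condition on $M$ (hence the singularity of every $M_{i,j}$) forces $R$ to be at most positive semidefinite. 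Using $|x_1'|\le x_1\le x-1$ and $z_1\ge 1$ together with Proposition~\ref{restrictions}$(iii)$ (which rules out $x_1=1$) and Lemma~\ref{deg1}$(ii)$ (which rules out $x_1=0$ when it would force two equal rows), I expect the feasible arithmetic to collapse to a single triple of parameters. The main difference from the previous section is the sign in the off-diagonal entries, which changes the kernel vectors and therefore changes which forbidden-subgraph or equal-rows argument eliminates each borderline case; tracking these sign changes carefully is where the bookkeeping lives.

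Once the structural parameters are pinned down, I would read off the induced structure of $G^\sigma_X$ and $G^\sigma_Y$ and verify that the resulting matrix admits an equitable partition whose quotient matrix $Q$ has exactly two eigenvalues unequal to $\pm1$. As throughout the paper, this is checked by computing $\mathrm{rank}(Q+I)$ and $\mathrm{rank}(Q-I)$ via Gaussian elimination; the surviving case should reproduce precisely the block structure of $A_9$, and the spectrum $\{-1^{m+2},1^{m+1},\frac{1}{2}(1\pm\sqrt{32m+9})\}$ then follows by the quotient-matrix computation already justified in the proof of Theorem~\ref{main}.

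The hardest part will be ensuring that no spurious solution survives: because the format~$(ii)$ sign pattern differs from format~$(iii)$, several intermediate triples that were eliminated earlier by a pentagon or by the last forbidden subgraph of Figure~\ref{forbidden-all} may now need a different exclusion argument, and conversely some that survived before may now be infeasible. I would therefore treat the borderline kernel cases (where $R$ is merely psd rather than pd) with particular care, checking each against Lemma~\ref{deg1}, Proposition~\ref{restrictions}, and Assumption~\ref{asm}. The expected payoff of the sign change is that it sharply restricts the feasible parameters, so that $A_9$ emerges as the unique outcome rather than as one member of a longer list.
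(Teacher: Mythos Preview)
Your outline follows the right template, but the key technical input---the residual matrix $R$---is wrong, and this error propagates through the rest of the argument.

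In format~$(ii)$ the $(1,2)$ entry of $M_{4,4}$ is $c_1+c_2-2$, not $c_1-c_2$. The paper decomposes $M_{4,4}=(c_1-2)J+(c_2-2)L+y_1K+R$ with $L=\left[\begin{smallmatrix}1&1&0\\1&1&0\\0&0&0\end{smallmatrix}\right]$ (note the $+1$ off the diagonal, unlike the $L$ used for format~$(iii)$), and this forces
\[
R=\left[\begin{array}{ccc}
x+2 & 2 & x_1'+1 \\ 2 & y-y_1+2 & 2 \\ x_1'+1 & 2 & x_1+1
\end{array}\right],
\]
with a $2$, not a $0$, in the $(1,2)$ position. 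Your proposed single-term replacement $(c_1+c_2-2)K$ cannot absorb both the $(1,2)$ entry and the $(1,3)$, $(3,3)$ entries simultaneously with a psd $K$, so your $R$ is simply the format~$(iii)$ residual copied over. The extra $2$ matters: it makes $R$ positive definite whenever $y-y_1\ge 3$, or $y-y_1=2$ and $x_1\neq 0$, so the analysis collapses immediately to $x_1=x_1'=0$, $y-y_1=2$, with kernel spanned by $[0,1,-2]^\top$; this vector lies in the kernel of $J$ and $L$ only when $c_1=c_2=2$ and $y_1=0$. None of the borderline case-splitting you anticipate is needed.

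Two smaller issues: you invoke $z_1\ge 1$, but in this section $Z=\emptyset$ so there is no $z_1$; and Proposition~\ref{restrictions}$(iii)$ does not ``rule out $x_1=1$''---it only forces the sign of an edge in $G^\sigma_X$ when $c_2\ge 3$, and here $c_2=2$. Finally, your sketch omits the decisive closing step: the vertex $v$ with $y-y_1\ge 2$ is now pinned down, but if $x\ge 2$ a second vertex of $X$ would give an identical row unless it satisfies $\det(M_{4,4})=0$ with $c_1=c_2=y=2$ and different data; the only such solution is $x_1=x_1'=1$, and it is this that produces the $R_2$ block and hence $A_9$.
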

\begin{proof}
From Proposition~\ref{restrictions}($iii,iv$) it follows that $x_1=-x_1'$, or $x_1=x_1'=1$.
We consider $M_{4,4}$ and write $M_{4,4}=(c_1-2)J+(c_2-2)L+y_1 K + R$, where
\[ 
L=\left[
\begin{array}{rrc}
1&1&0 \\ 1&1&0 \\ 0&0&0
\end{array}
\right],\ 
K=\left[\begin{array}{rrc}
0&0&0 \\ 0&1&1 \\ 0&1&1
\end{array}
\right],\ \mbox{ and }
R=\left[\begin{array}{ccc}
x+2 & 2 & x_1'+1 \\ 2 & y-y_1+2 & 2 \\ x_1'+1 & 2 & x_1+1
\end{array}
\right].
\]
Then $J$, $K$ and $L$ are psd.
Moreover, if $y-y_1\geq 3$, or $y-y_1=2$ and $x_1\neq 0$ then $R$ is pd. 
Using $y-y_1\geq 2$ we obtain $x_1= x_1'=0$ and $y-y_1 = 2$.
In this case $R$ is psd with kernel spanned by $[0,1,-2]^\to$.
So only if $c_1=c_2=2$ and $y_1=0$ the kernel of $M_{4,4}$ is nontrivial. 
This implies that $M_{4,4}$ is singular only if $(c_1,c_2,y,x_1,y_1)=(2,2,2,0,0)$.
If $X$ contains another vertex with $x_1=y_1=0$ then $A$ has two equal rows,
so if $x\geq 2$ then there must be 
another solution for $\det(M_{4,4})=0$
with $c_1=c_2=y=2$.
Such a solution exists only if $x_1=x_1'=1$ which leads to $A_9$.
\end{proof}
As in the previous section it follows from Proposition~\ref{12-1} that if $A\neq A_9$ then the adjacency matrix of $G^\sigma _{X\cup Y}$ takes the form:
\[
A_{X\cup Y}=
\left[ 
\begin{array}{cc}
A_X & B \\
B^\to & A_Y
\end{array}
\right] 
\mbox{ with }
B=
\left[ 
\begin{array}{cc}
J & J \\
J-I_m & J
\end{array}
\right] 
\mbox{ and }
0\leq m\leq\min\{x,y\},
\] 
and with the same arguments, but (slightly) different quotient matrices we obtain the following result:
\begin{prp}
The matrix $B$ above has $m=0$
and $A$ equals $A_8$ with $m=1$, $A_{12}$
(only the case $k=\ell$ because of Assumption~\ref{asm}), or $A$ is an unsigned matrix given in case~$(iii)$ of Theorem~1 from~\cite{CHVW}.
\end{prp}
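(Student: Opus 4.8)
The statement parallels Proposition~\ref{12-1} exactly but now requires me to rule out $0<m\le\min\{x,y\}$ and to enumerate the surviving $m=0$ configurations. My plan is to follow the template already established in the Format~$(iii)$ section verbatim, since the only difference is that here $\epsilon=1$ (so the off-diagonal block coupling $C_1$ to $C_2$ is $J$ rather than involving a sign change) and consequently the quotient matrices are slightly different. First I would invoke Proposition~\ref{restrictions}$(ii)$ to observe that the rows of $B$ equal to $J-I_m$ force the corresponding vertices $X_m\subset X$ and $Y_m\subset Y$ to be isolated in $G^\sigma_X$ and $G^\sigma_Y$. Then I would dispose of the intermediate range $0<m<\min\{x,y\}$ by building the explicit $6\times 6$ equitable quotient matrix $Q$ (with parts $C_1,C_2,X\setminus X_m,X_m,Y_m,Y\setminus Y_m$), and checking by Gaussian elimination on $Q+I$ and $Q-I$ that at least four eigenvalues differ from $\pm 1$; the boundary subcases $m=x<y$ and $m=y<x$ give $5\times 5$ quotients handled the same way. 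This is pure linear algebra and mirrors the previous section's computation.

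Having forced $m=0$ (so $B=J$), I would next enumerate the possible structures of $G^\sigma_X$ and $G^\sigma_Y$. By Proposition~\ref{restrictions}$(i,iv)$ together with Lemma~\ref{deg1}$(ii)$ (no two equal rows, hence at most one isolated vertex and no isolated vertex coexisting with an isolated edge), each of $G^\sigma_X,G^\sigma_Y$ is one of: a clique, a clique plus one isolated vertex, or a disjoint union of signed edges. For each of the resulting pairings I would write down the corresponding equitable partition of $A$ and reduce to checking when its quotient matrix has at most two eigenvalues off $\pm 1$. The cross-case with a clique on one side and signed edges on the other, and the case where $G^\sigma_X$ mixes positive and negative edges, are ruled out by the same $3\times 3$ nonsingularity argument used before (the submatrix $NN^\to-I$ on two edges of opposite sign plus one clique vertex is nonsingular). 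Collecting the surviving configurations, I expect exactly $A_8$ with $m=1$, the symmetric instance $A_{12}$ with $k=\ell$ (the restriction coming from Assumption~\ref{asm}), and the unsigned family of case~$(iii)$ of Theorem~1 in~\cite{CHVW}.

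\textbf{Expected obstacle.}
The bookkeeping, not any single deep idea, is the hard part. Because $\epsilon=1$ the off-diagonal $C_1$--$C_2$ block of $M$ and of every quotient matrix differs from the Format~$(iii)$ version, so I cannot simply cite the earlier computations---each Gaussian elimination on $Q\pm I$ has to be redone with the new entries, and I must verify that the numerology genuinely collapses to the single symmetric $A_{12}$ case rather than producing spurious extra solutions. The most delicate point will be confirming that Assumption~\ref{asm} (maximality of $c$ and then of the outgoing-edge count) is exactly what eliminates the $k\neq\ell$ branches of $A_{12}$, so that the list is neither over- nor under-counted; I would double-check this by explicitly comparing the outgoing-edge counts of $C_1\cup C_2$ against the alternative maximal cliques one could form inside $X\cup Y$ when $G^\sigma_X$ or $G^\sigma_Y$ is itself a large clique.
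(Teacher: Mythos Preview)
Your proposal is correct and follows essentially the same approach as the paper, which explicitly says the proof proceeds ``with the same arguments, but (slightly) different quotient matrices'' as in the Format~$(iii)$ section. Your plan---forcing $m=0$ via rank computations on the equitable quotient matrices, then enumerating the admissible structures for $G^\sigma_X$ and $G^\sigma_Y$ and checking each resulting quotient---is exactly what the paper intends, and your identification of the key change ($\epsilon=1$ versus $\epsilon=-1$) and of Assumption~\ref{asm} as the mechanism selecting $k=\ell$ in $A_{12}$ is on target.
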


\subsection{$c_1=1$ and $c_2\geq 2$}

Since $c_1=1$, Assumption~\ref{asm} yields that $Y$ is a coclique, so $y_2=y_2'=y_{1,2}=0$, and $M$ becomes:
\[
M=
\left[
\begin{array}{cccc}
c_2+x-1  & c_2-1 & x_1' & c_2+x_2  \\[3pt]
c_2-1 & c_2+y-1 & y_1+1 & c_2-1  \\[3pt]
x_1' & y_1+1 & x_1+y_1+z_1 & x_{1,2} \\[3pt]
c_2+x_2 & c_2-1 & x_{1,2} & c_2+x_2-1
\end{array}
\right].
\]
Suppose vertex $v$ of $G^\sigma_X$ is isolated.
Then 
$x_1=x_1'=x_{1,2}=0$ and 
$\det(M_{3,3})=-((c_2+x_2)y+(x_2+1)(c_2-1))\neq 0$. 
Hence $G^\sigma_X$ has no isolated vertex and from Proposition~\ref{restrictions}$(i,iii,iv)$ it follows that there are just three possibilities: 
$(i)$ $G^\sigma_X$ is the complete graph with all edges negative,
$(ii)$ $G^\sigma_X$ is the disjoint union of two or more negative edges, and 
$(iii)$ $c_2=2$ and $G^\sigma_X$ is the disjoint union of two or more signed edges (not all negative).
Assume we have case $(i)$, then $x_1 = -x_1' = x-1 > 0$ and Proposition~\ref{restrictions}$(ii)$ 
implies that every vertex in $Y$ is adjacent to all or no vertices of $X$.
If $w\in Y$ is adjacent to all of $X$, then $x_2=x$ and 
find $\det(M_{3,4})=(x-1)(c_2+y-1)+(c_2-1)(y_1+1)>0$.
If every vertex of $Y$ is nonadjacent to every vertex of $X$, then $y_1=x_2=0$ and we find $\det(M_{3,4})=(x-1)yc_2>0$.
So we can conclude that case $(i)$ does not occur.
For the remaining two cases $G^\sigma_X$ is a disjoint union of edges. 
Consider an edge $e$ of $G^\sigma_X$.
By Proposition~\ref{restrictions}$(ii)$ every vertex in $G^\sigma_Y$ is adjacent to none or both vertices of $e$.
Therefore $x_2$ and $x-x_2$ are even.
Next consider case $(ii)$. 
Suppose $\{v,w\}$ is an edge between $X$ and $Y$, then 
$x_1'= \x_{1,2}=-x_1=-1$ and we find 
$\det(M_{3,4})= (x-x_2-1)(-(c_2+y-1)-(c_2-1)(y_1+1))\neq 0$.
Therefore there are no edges between $X$ and $Y$. 
So $x_2=y_1=0$, $x_1=-x_1'=1$ and 
$\det(M_{3,1})=-y(c_2-1)-(c_2-1)$, which implies $y=1$.
Now $\det(M_{1,4}) = 0$ gives $x=4$ and $c_2=2$
and thus we find that $A$ equals $A_8$ with $m=1$.
In case $(iii)$ we have $c_2=2$ and consider $M_{3,3}$.
It is straigtforward that $\det(M_{3,3})\neq 0$, except when $x=4$, $y=1$ and $x_2=0$ which again leads to $A_8$ with $m=1$.
Thus we conclude that the last case gives no new examples.

{\begin{ccl}
If $A$ has Format $(ii)$ with $Z=\emptyset$ and $X,Y \neq\emptyset$ then 
$A$ is equal to $A_8$ with $m=1$, $A_{9}$, $A_{12}$ with $k=\ell$, or $A$ is an unsigned matrix given in Theorem~1$(iii)$  of~\cite{CHVW}.
\end{ccl}

\section{Recapitulation} 
By combining the conclusions of sections 6, and 8 to 12 we have completed the proof of Theorem~\ref{main}. 
Note that the proof doesn't exclude the unsigned examples. 
Thus we rediscovered the unsigned characterization from \cite{CHVW}.
As in the unsigned case, with the present characterization one can examine which signed graphs in $\SG$ are determined, up to switching, by the spectrum. 
This however, is more involved as in the unsigned case and will be the subject of further research.
\\[3pt]

\noindent
{\bf\Large Acknowledgement}
\\[-3pt]

\noindent
Part of the research was done while the first author visited the Haci Bekta\c{s} Veli University of Nev\c{s}ehir on a grant from T\"ubitac.

\end{document}